\newtheorem{thm}{Theorem}[section]
\newtheorem{defn}[thm]{Definition}
\newtheorem{lem}[thm]{Lemma}
\newtheorem{cor}[thm]{Corollary}
\newtheorem{prop}[thm]{Proposition}
\theoremstyle{definition}
\newtheorem{rmk}[thm]{Remark}
\numberwithin{equation}{section}
\def\al{\alpha}
\def\de{\delta}
\def\la{\lambda}
\def\La{\Lambda}
\def\Ga{\Gamma}
\def\C{\mathbb{C}}
\def\N{\mathbb{N}}
\def\cR{\mathcal R}
\def\cH{\mathcal H}
\def\pol{\text{\rm pol}}
\def\Id{\text{\rm I}}
\newcommand{\rFs}[5]{\,_{#1}F_{#2} \left( \genfrac{.}{.}{0pt}{}{#3}{#4};#5 \right)}
\newcommand{\SU}{\mathrm{SU}}
\newcommand{\U}{\mathrm{U}}
\title[Matrix-valued Gegenbauer polynomials]
{Matrix-valued Gegenbauer polynomials}
\author{Erik Koelink}
\address{IMAPP, Radboud Universiteit, 
PO Box 9010, 6500 GL Nijmegen, 
the Netherlands}
\email{e.koelink@math.ru.nl}
\author{Ana M. de los R\'\i os}
\address{Departamento de An\'{a}lisis Matem\'{a}tico, Universidad de Sevilla, 
Apdo (PO Box) 1160, 41080 Sevilla, Spain}
\email{amdelosrios@us.es}
\author{Pablo Rom\'an}
\address{CIEM,
FaMAF, Universidad Nacional de C\'ordoba, Medina Allende s/n Ciudad
Universitaria, C\'ordoba, Argentina}
\email{roman@famaf.unc.edu.ar}
\date{\today}
\begin{document}

\begin{abstract}
We introduce matrix-valued weight functions of arbitrary size, which are analogues of the weight function for the Gegenbauer or ultraspherical polynomials for the parameter $\nu>0$. The LDU-decomposition of the 
weight is explicitly given in terms of Gegenbauer polynomials. We establish a matrix-valued Pearson equation for these matrix weights leading to explicit shift operators relating the weights for parameter $\nu$ and $\nu+1$. The matrix coefficients of the Pearson equation are obtained using a special matrix-valued differential operator in a commutative algebra of symmetric differential operators.

The corresponding orthogonal polynomials are the matrix-valued Gegenbauer polynomials which are eigenfunctions for the symmetric matrix-valued differential operators. Using the shift operators we find the squared norm and we establish a simple Rodrigues formula.
The three-term recurrence relation is obtained explicitly using the shift operators as well. We give an explicit non-trivial expression for the matrix entries of 
the matrix-valued Gegenbauer polynomials in terms of scalar-valued Gegenbauer and Racah polynomials using the LDU-decomposition and differential operators. The case $\nu=1$ reduces to the case of matrix-valued Chebyshev polynomials previously obtained using 
group theoretic considerations. 
\end{abstract}

\maketitle
%%%%%%%%%%%%%%%%%%%%%%%%%%%%%%%%%%%%%%%%%%%%%%%%%%%%%%%%%%%%%%%%%%%%%%
%%%%%%%%%%%%%%%%%%%%%%%%%%%%%%%%%%%%%%%%%%%%%%%%%%%%%%%%%%%%%%%%%%%%%%
%%%%%%%%%%%%%%%%%%%%%%%%%%%%%%%%%%%%%%%%%%%%%%%%%%%%%%%%%%%%%%%%%%%%%%
\section{Introduction}\label{sec:intro}

Matrix-valued orthogonal polynomials have been introduced and studied by Krein 
in connection with spectral analysis and moment problems, see \cite{Krein2}, \cite{Krein1} and  \cite{Bere}. 
Matrix-valued orthogonal polynomials have 
applications in the study of higher order recurrences and their spectral 
analysis, see e.g. \cite{ApteN}, \cite{DuravA}, \cite{GroenIK}, 
communication theory \cite{Geronimo},
Toda lattices \cite{AlvaAGAMM}, \cite{AriM}, \cite{Gek} to name a few. 

Using matrix-valued spherical functions \cite{GangV} one is able to 
associate matrix-valued orthogonal polynomials to certain 
symmetric pairs. The first case was studied by Gr\"unbaum et al. \cite{GrunPT}
for the case of the symmetric pair $(G,K)=(\SU(3), \U(2))$ relying heavily 
on invariant differential operators. Another more direct approach
was developed in \cite{KoelvPR1}, \cite{KoelvPR2} for the case
of $(\SU(2)\times \SU(2), \text{diag})$ inspired by \cite{Koor1}. See also \cite{HeckvP}, \cite{Prui} 
for  the general set-up in the context of the so-called multiplicity free pairs. 
In particular, \cite{KoelvPR1}, \cite{KoelvPR2} gives a detailed study 
of the matrix-valued orthogonal polynomials, which can be considered
as matrix-valued analogues of the Chebyshev polynomials, i.e. the 
spherical polynomials on $(\SU(2)\times \SU(2), \text{diag})$ better
known as the characters on $\SU(2)$, see also 
\cite{AldeKR} for the quantum group case. 

The purpose of this paper is to extend the family of matrix-valued Chebyshev
polynomials in \cite{KoelvPR1}, \cite{KoelvPR2} to a family of matrix-valued Gegenbauer
polynomials using shift operators, where the lowering operator is the derivative. In the matrix-valued
setting the raising operator, being the adjoint of the derivative, is harder to obtain and involves a 
matrix-valued analogue of the Pearson equation. The ingredients for the matrix-valued Pearson equation
are obtained from a matrix-valued differential operator which is well suited for a Darboux factorisation.
The use of shift operators is a well established technique in special functions, see e.g.  \cite{Koor}, \cite{Opdam} for explicit examples and \cite{AndrAR}, \cite{Isma}, \cite{KoekLS}
for general information. 

Let us recall the classical case of the shift operators for the Gegenbauer polynomials. The Chebyshev polynomials can be considered as the Gegenbauer  polynomials $C^{(\nu)}_n$ with $\nu=1$ and by successive derivation
the Gegenbauer polynomials $C^{(\nu)}_n$ with $\nu\in \N\setminus\{ 0\}$
can be obtained. Moreover, many properties of the Chebyshev polynomials
can be transported to Gegenbauer polynomials $C^{(\nu)}_n$ with integer $\nu$.
Then one can obtain the general family of Gegenbauer polynomials $C^{(\nu)}_n$
by continuation in $\nu$, or one can extend differentation using suitable 
fractional integral operators. 
Let us recall this in some more detail, see e.g. \cite{AndrAR}, \cite{Isma},  \cite{KoekLS}. 
The Gegenbauer polynomials 
\begin{equation}\label{eq:defGegenbauerpols}
C^{(\nu)}_n(x) = \frac{(2\nu)_n}{n!} \rFs{2}{1}{-n, \, n+2\nu}{\nu+\frac12}{\frac12(1-x)}
\end{equation}
are eigenfunctions to 
an explicit second-order differential operator of hypergeometric type;
\begin{equation}\label{eq:DEscalarGegenbauer}
D^{(\nu)}C^{(\nu)}_n = - n(n+2\nu) C^{(\nu)}_n, \qquad
(D^{(\nu)}y)(x)=(1-x^2) y''(x) - (2\nu+1)xy'(x).
\end{equation}
The orthogonality for the Gegenbauer polynomials is 
\begin{equation}\label{eq:orthoscalarGegenbauer}
\int_{-1}^1 C^{(\nu)}_n(x) C^{(\nu)}_m(x) w^{(\nu)}(x)\, dx = \de_{m,n} h_n^{(\nu)},
\qquad h_n^{(\nu)}=\frac{(2\nu)_n\, \sqrt{\pi}\, \Ga(\nu+\frac12)}{n!\, (n+\nu)\, \Ga(\nu)}
\end{equation}
with $w^{(\nu)}(x)=(1-x^2)^{\nu-1/2}$.
In particular, using \eqref{eq:defGegenbauerpols} shows that 
$\frac{dC^{(\nu)}_n}{dx}(x)= 2\nu\, C^{(\nu+1)}_{n-1}(x)$. 
We view the derivative $\frac{d}{dx}$ as an unbounded operator 
$L^2(w^{(\nu)})\to  L^2(w^{(\nu+1)})$ with respect to the integral over $(-1,1)$.
It is a shift (or lowering) operator since it lowers the degree of a polynomial by one. 
Its adjoint $-S^{(\nu)} \colon L^2(w^{(\nu+1)})\to  L^2(w^{(\nu)})$
is explicitly given by  
$\bigl(S^{(\nu)} f\bigr)(x) = (1-x^2)\frac{df}{dx}(x) -(2\nu+1)xf(x)$. 
This follows from the Pearson equation 
\begin{equation}\label{eq:Pearsonscalar}
w^{(\nu+1)}(x) =  (1-x^2)\, w^{(\nu)}(x), \qquad 
\frac{dw^{(\nu+1)}}{dx}(x) = -(2\nu+1)x\, w^{(\nu)}(x),
\end{equation}
see also \cite{RahmS}. It follows that  $S^{(\nu)}\colon C^{(\nu+1)}_{n-1}\mapsto 
\frac{-n(2\nu+n)}{2\nu} C^{(\nu)}_n$
by considering the orthogonality relations and the leading coefficients. 
Then  $D^{(\nu)} = S^{(\nu)}\circ \frac{d}{dx}$ gives a factorisation 
in terms of a lowering and a raising operator of the 
differential operator \eqref{eq:DEscalarGegenbauer} 
having the Gegenbauer polynomials as eigenfunctions. 
In particular, we find $\frac{d}{dx}\circ D^{(\nu)} = 
\frac{d}{dx}\circ S^{(\nu)}\circ \frac{d}{dx}$, so that $C^{(\nu+1)}_{n-1}(x)$,
being a multiple of the derivative of $C^{(\nu)}_{n}(x)$, is an eigenfunction to 
$\frac{d}{dx}\circ S^{(\nu)}$. 
This Darboux transformation, i.e. 
interchanging the raising and lowering operators, 
gives $\frac{d}{dx} \circ S^{(\nu)} = D^{(\nu+1)} -(2\nu+1)$, 
and this is also known as a transmutation property.

The main results of this paper are stated in Sections \ref{sec:weightfunctionandsymmetricdiffop} and \ref{sec:MVGegenbauerpolsandprops}. In Section \ref{sec:weightfunctionandsymmetricdiffop}  we define the weight
function and we state its LDU-decomposition which is essential in most of the proofs in this paper. The inverse of the matrix $L$ in this decomposition is a particular case of the results of Cagliero and Koornwinder \cite{CaglK}. We obtain a commutative algebra generated by two matrix-valued differential operators which are symmetric with respect to the matrix weight. From these differential operators we obtain the ingredients for a matrix-valued Pearson equation which, in turn, gives us the matrix-valued adjoint of the derivative. Finally, in Section \ref{sec:weightfunctionandsymmetricdiffop}  we describe the commutant algebra of the weight, showing that there is a non-trivial orthogonal decomposition for each weight. In \cite{KoelR} we show that no further reduction is possible. The proofs of these statements are given in Section \ref{sec:differentialoperators}.

In Section \ref{sec:MVGegenbauerpolsandprops} we discuss the corresponding monic matrix-valued orthogonal polynomials. Using the shift operators we explicitly evaluate the squared norms and we obtain a simple Rodrigues formula for the polynomials. Moreover, the polynomials are eigenfunctions for the matrix-valued differential operators in Section \ref{sec:weightfunctionandsymmetricdiffop}. The matrix entries of the monic matrix-valued orthogonal polynomials are explicitly calculated in terms of Gegenbauer and Racah polymomials. We give the three-term recurrence relation for the polynomials explicitly. We give the proofs in Section \ref{sec:MVOPs}.

In Section \ref{sec:differentialoperators} we give an elementary approach to symmetry of matrix-valued differential operators which we expect to be useful in other cases as well. We have relegated all the proofs that only involve Gegenbauer polynomials to the appendices.

\begin{rmk}
All proofs are direct and only use special functions. Anyway we have benefited of the use of computer algebra 
as Maple, in order to come up with explicit conjectures. This means that we have checked many of our results up to a sufficiently large size of the matrices involved. For the reader's convenience the worksheet is available via the first author's webpage.\footnote{\texttt{http://www.math.ru.nl/\~{}koelink}}
\end{rmk}

\subsection*{Acknowledgement.} We thank Ruiming Zhang for asking the questions at 
a presentation by one of us (EK) on \cite{KoelvPR1}, \cite{KoelvPR2}, which eventually led to this paper. 
The research of AMdlR is partially supported by  MTM2012-36732-C03-03 (Ministerio de Econom\'ia y Competitividad),
FQM-262, FQM-4643, FQM-7276 (Junta de Andaluc\'ia) and Feder Funds (European Union).
Part of the work was done while EK was visiting Universidad Nacional C\'ordoba and while AMdlR or PR was visiting Radboud Universiteit.
We thank both universities for their hospitality.
PR's visit to Nijmegen was supported by a NWO-Visiting Grant 040.11.366. 
PR was also supported by the Coimbra Group Scholarships Programme at KULeuven in the period February-May 2014.

\section{The weight function and symmetric differential operators}\label{sec:weightfunctionandsymmetricdiffop}

The weight function is introduced by defining its matrix entries, which are taken with respect to the standard orthonormal 
basis $\{e_0,e_1,\cdots, e_{2\ell}\}$ of $\C^{2\ell+1}$ for $\ell\in\frac12 \N$. 
We suppress $\ell$ from the notation as much as possible, which occurred initially as the spin of a representation of $\SU(2)$, see \cite{KoelvPR1}. 
After defining the weight we discuss its LDU-decomposition, and we study commuting symmetric matrix-differential operators 
for the weight. These matrix-differential operators follow directly using the fact that we want
the transmutation property with the derivative. Then we 
obtain in this algebra a symmetric second-order matrix-valued differential operator on which we can 
apply the Darboux factorisation, which gives the entries for the matrix-valued Pearson 
equation as in \cite{CantMV}. 

\begin{defn}\label{def:weightW}
For $\ell\in\frac12\N$ and $\nu> 0$,  we define the $(2\ell+1)\times (2\ell+1)$-matrix-valued functions 
$W^{(\nu)}$ by 
\begin{equation}\label{eq:defWlambda}
\begin{split}
\left(W^{(\nu)}(x)\right)_{m,n}= &\, (1-x^2)^{\nu-1/2} \hskip-.3truecm \sum_{t=\max(0, n+m-2\ell)}^{m} 
\hskip-.3truecm  \al_{t}^{(\nu)}(m,n) 
C_{m+n-2t}^{(\nu)}(x)\\ 
\alpha_{t}^{(\nu)}(m,n) =& \, (-1)^{m} 
\frac{n!\, m!\, (m+n-2 t)!}{t!\, (2\nu)_{m+n-2t}\, (\nu)_{n+m-t}}
\frac{(\nu)_{n-t} (\nu)_{m-t}}{(n-t)!\, (m-t)!}\frac{(n+m-2t+\nu)}{(n+m-t+\nu)}\\ 
& \, \times (2\ell-m)!(n-2\ell)_{m-t}(-2\ell-\nu)_{t} \frac{(2\ell+\nu)}{(2\ell)!} 
\end{split}
\end{equation}
where $n,m\in\{0,1,\cdots, 2\ell\}$ and $n\geq m$. The matrix is extended to a symmetric matrix, 
$\left(W^{(\nu)}(x)\right)_{m,n}=\left(W^{(\nu)}(x)\right)_{n,m}$.
Finally, put $W^{(\nu)}(x) = (1-x^2)^{\nu-1/2} W^{(\nu)}_{\pol}(x)$.
\end{defn}

In order to show that matrix-valued weight  fits into the general theory of 
matrix-valued orthogonal polynomials we calculate its LDU-decomposition. 
This is also very useful in establishing symmetry of matrix-valued 
differential operators and in the study of the derivative in this context. 

\begin{thm}\label{thm:LDUweight}
For $\nu> 0$, $W^{(\nu)}(x)$ has the following LDU-decomposition
\begin{equation*}
W^{(\nu)}(x)=
% (1-x^2)^{\nu-1/2}
L^{(\nu)}(x)T^{(\nu)}(x)L^{(\nu)}(x)^{t}, \qquad x\in(-1,1),
\end{equation*} 
where $L^{(\nu)}\colon [-1,1]\to M_{2\ell+1}(\C)$ is the unipotent lower triangular matrix-valued polynomial 
\[
\bigl(L^{(\nu)}(x)\bigr)_{m,k}=\begin{cases} 0 & \text{if } m<k \\
% \displaystyle{\frac{m!}{(m-k)!\, k!}  \rFs{2}{1}{-m+k,m+k+2\nu}{\frac12+k+\nu}{\frac{1-x}{2}}} & \text{if } m\geq k.
\displaystyle{\frac{m!}{k! (2\nu+2k)_{m-k}} C^{(\nu+k)}_{m-k}(x)} & \text{if } m\geq k.
\end{cases}
\]
and $T^{(\nu)}\colon (-1,1)\to M_{2\ell+1}(\C)$ is the diagonal matrix-valued function 
\begin{equation*}
\begin{split}
\bigl(T^{(\nu)}(x)\bigr)_{k,k}\, =\, t^{(\nu)}_{k}\,  (1-x^2)^{k+\nu-1/2}, \quad 
 t^{(\nu)}_{k}\, =\, \frac{k!\, (\nu)_k}{(\nu+1/2)_k}
\frac{(2\nu + 2\ell )_{k}\, (2\ell+\nu)}{(2\ell - k+1)_k\, ( 2\nu + k - 1)_{k}}.
\end{split}
\end{equation*}
\end{thm}

Theorem \ref{thm:LDUweight} is proved in Appendix 
\ref{app:proofthmLDUdecompW}, and it is a straightforward extension of the 
proof in \cite{KoelvPR2}. 
$L(x)$ is invertible, its inverse being an unipotent lower triangular matrix as well.
Cagliero and Koornwinder \cite{CaglK} give the matrix entries of $L(x)^{-1}$ explicitly in 
terms of Gegenbauer polynomials with negative $\nu$, see \eqref{eq:inverseLbyCK}. 
Note that Theorem \ref{thm:LDUweight} gives that $W^{(\nu)}(x)>0$ for $x\in (-1,1)$.
It is possible to extend Definition \ref{def:weightW} and Theorem \ref{thm:LDUweight} 
to $-\frac12< \nu\leq 0$, but for $\nu=0$ the weight $W^{(\nu)}$ is indefinite and 
for $-\frac12< \nu < 0$ it is has non-trivial signature depending on the size. 
So in this paper we assume $\nu>0$. 

For matrix-valued functions $P$ and $Q$ with, say, $C([-1,1])$-entries we define for $\nu>0$ 
the matrix-valued inner product 
\begin{equation}\label{eq:defMVinnerproductnu}
\langle P, Q\rangle^{(\nu)} = \int_{-1}^1 P(x) \, W^{(\nu)}(x) \, (Q(x))^\ast \, dx \in M_{2\ell+1}(\C). 
\end{equation}
Note that the integral of the entries are well-defined.  

A matrix-valued differential operator acts from the right; 
\begin{equation}\label{eq:defMVdiffoperator}
\bigl( PD\bigr)(x) = \frac{d^2P}{dx^2}(x) F_2(x) + \frac{dP}{dx}(x) F_1(x) + P(x) F_0(x)
\end{equation}
where $D= \frac{d^2}{dx^2} F_2 + \frac{d}{dx}(x) F_1 + F_0$, $F_i$ are matrix-valued
functions and $P$ is a matrix-valued function with $C^2$-entries. 
The derivatives in \eqref{eq:defMVdiffoperator} of a matrix-valued function are
taken entry-wise. A matrix-valued differential operator $D$ is symmetric for the weight $W^{(\nu)}$
if 
$\langle PD, Q\rangle^{(\nu)} = \langle P, QD\rangle^{(\nu)}$
for all matrix-valued functions $P$ and $Q$ with $C^2([-1,1])$-entries. 

\begin{thm}\label{thm:thmdifferentialoperators}
For $\nu >0$, let $D^{(\nu)}$ and $E^{(\nu)}$ be the matrix-valued differential operators
\begin{gather*}
D^{(\nu)}\,=\,\left(\frac{d^2}{dx^2}\right) (1-x^2)+\left(\frac{d}{dx}\right)(C-x(2\ell+2\nu+1)I)-V,\\
E^{(\nu)}\,=\,\left(\frac{d}{dx}\right)(xB_1+B_0)+A_0^{(\nu)},
\end{gather*}
where the matrices $C$, $V$, $B_0^{(\nu)}$, $B_1^{(\nu)}$ and $A_0^{(\nu)}$ are given by
\begin{gather*}
C=\sum_{i=0}^{2\ell-1} (2\ell-i) E_{i,i+1} + \sum_{i=1}^{2\ell} i E_{i,i-1},
\qquad 
V= -\sum_{i=0}^{2\ell} i(2\ell-i) E_{i,i} \\
2\ell\, B_0=\sum_{i=0}^{2\ell-1} (2\ell-i) \, E_{i,i+1} - \sum_{i=1}^{2\ell} i\,E_{i,i-1},\qquad 
\ell \, B_1=-\sum_{i=0}^{2\ell} (\ell-i) \, E_{i,i}, \\ 
\ell \, A^{(\nu)}_0=\sum_{i=0}^{2\ell} ((\ell+1)(i-2\ell)  - (\nu-1)(\ell-i)) \, E_{i,i}.
\end{gather*}
Then $D^{(\nu)}$ and $E^{(\nu)}$ are symmetric with respect to the weight $W^{(\nu)}$, and $D^{(\nu)}$ and $E^{(\nu)}$ commute. 
\end{thm}

The operator $D^{(\nu)}$ matches the case $\nu=1$ in \cite[Thm.~3.1]{KoelvPR2}. The explicit expression of $E^{(\nu)}$ for $\nu=1$ corrects a mistake in \cite[Thm.~3.1]{KoelvPR2} and upon a change of variables $x=1-2u$ matches \cite[Cor.~4.1]{KoelvPR2}. The explicit expressions for $D^{(\nu)}$ and $E^{(\nu)}$ for $\nu=1$ are given 
in \cite{KoelvPR1}, \cite{KoelvPR2}. With this definition we have   
$E^{(\nu+1)}\circ \frac{d}{dx}= \frac{d}{dx} \circ E^{(\nu)}$ and 
$\bigl( D^{(\nu+1)}-(2\ell+2\nu+1)\Id\bigr)\circ \frac{d}{dx}= \frac{d}{dx} \circ D^{(\nu)}$.
In order to prove the statement on the symmetry 
in Theorem \ref{thm:thmdifferentialoperators} we use Theorem \ref{thm:LDUweight}. 

We next look for a second-order matrix-valued differential operator  generated by the 
commuting operators $D^{(\nu)}$ and $E^{(\nu)}$ having no constant term, i.e. $F_0=0$ in the notation of \eqref{eq:defMVdiffoperator}. The reason is that in the matrix-valued case 
the constant term $F_0$ cannot be moved into the eigenvalue matrix, see e.g. 
Theorem \ref{thm:thmdifferentialoperatorsP_d}, unless $F_0$ is a multiple of the identity. 
A straightforward calculation gives 
\begin{equation}\label{eq:relDPhiPsi-DnuEnu}
\begin{split}
&\mathscr{D}^{(\nu)} =(E^{(\nu)})^{2}+(2\ell+2)E^{(\nu)}+\left(\frac { \left( \ell+\nu \right)^{2}}{{\ell}^{2}}\right)D^{(\nu)}+
\frac {\nu \left( \nu-1 \right)  \left( 2\,\ell+\nu+1 \right)  \left(\nu+2\,\ell \right) }{{\ell}^{2}} \Id \\
&\qquad\qquad  
(P\mathscr{D}^{(\nu)} ) =  \frac{d^2P}{dx^2}(x) \Bigl( \Phi^{(\nu)}(x) \Bigr)^\ast + 
\frac{dP}{dx}(x) \Bigl( \Psi^{(\nu)}(x) \Bigr)^\ast 
\end{split}
\end{equation}
and this defines matrix-valued polynomials $\Phi^{(\nu)}$ and $\Psi^{(\nu)}$ of degree $2$ and $1$. The explicit expressions for $\Phi^{(\nu)}$ and $\Psi^{(\nu)}$ are given in 
\eqref{eq:PhuPsiexplicit} and \eqref{eq:PhuPsiexplicit2}. 
In particular we find that the $\nu$-dependence of 
$\Phi^{(\nu)}(x)= \Phi(x) + \frac{(\ell+\nu)^2}{\ell^2}(1-x^2)\Id$ is rather simple. 
Now $\mathscr{D}^{(\nu)}$ is factored as derivation followed by a first order matrix-valued differential operator,
and in order to study this operator we need the analogue of the Pearson 
equation \eqref{eq:Pearsonscalar}.

\begin{thm}\label{thm:Pearson}
We have 
\begin{gather*}
\frac{d(W^{(\nu)}\Phi^{(\nu)})}{dx}(x) =W^{(\nu)}(x)\Psi^{(\nu)}(x), \quad 
W^{(\nu+1)}(x)=c^{(\nu)}W^{(\nu)}(x)\Phi^{(\nu)}(x),   \\
c^{(\nu)} = \frac{(2\nu+1)(2\ell+\nu+1) \ell^{2}}{\nu (2\nu+2\ell+1) (2\ell+\nu) (\ell+\nu)}. 
\end{gather*}
\end{thm}

The matrix-valued Pearson equation of Theorem \ref{thm:Pearson} is much more involved than
its scalar companion \eqref{eq:Pearsonscalar}, and it fits into the framework 
of \cite[\S 3]{CantMV}. 

The space of matrix-valued functions with continuous entries with respect to \eqref{eq:defMVinnerproductnu} forms a pre-Hilbert C$^\ast$-module, see 
\cite{Lanc}, with $M_{2\ell+1}(\C)$ the corresponding (finite-dimensional) C$^\ast$-algebra. Note that we consider the pre-Hilbert C$^\ast$-module 
as a left module for $M_{2\ell+1}(\C)$ and the inner product to be conjugate linear in the second variable, in contrast with \cite{Lanc}. 
Let $\cH^{(\nu)}$ be the Hilbert C$^\ast$-module which is the completion \cite[p.~4]{Lanc}, then 
$\frac{d}{dx}\colon \cH^{(\nu)} \to \cH^{(\nu+1)}$ is an unbounded operator with dense domain and dense range in  
$\cH^{(\nu)}$ and $\cH^{(\nu+1)}$. In general, a linear operator from
one Hilbert C$^\ast$-module to another does not necessarily have an adjoint, but in this case
it does.  

\begin{cor}\label{cor:adjointddx}
(i) Define the first order matrix-valued differential operator $S^{(\nu)}$ by
\begin{equation*}
\bigl( QS^{(\nu)}\bigr)(x)\, =\, \frac{dQ}{dx}(x) \bigl(\Phi^{(\nu)}(x)\bigr)^\ast + Q(x)\bigl(\Psi^{(\nu)}(x)\bigr)^\ast,
\end{equation*}
then 
$\langle \frac{dP}{dx}, Q\rangle^{(\nu+1)} = - c^{(\nu)} \langle P, QS^{(\nu)}\rangle^{(\nu)}$
for matrix-valued functions $P$ and $Q$ with $C^1([-1,1])$-entries, with $c^{(\nu)}$ as in 
Theorem \ref{thm:Pearson}. \\
(ii) $\langle P\mathscr{D}^{(\nu)}, Q\rangle^{(\nu)}= -c^{(\nu)} 
\langle \frac{dP}{dx}, \frac{dQ}{dx}\rangle^{(\nu+1)}$
for matrix-valued functions $P$ and $Q$ with $C^2([-1,1])$-entries. 
\end{cor}

Note that Corollary \ref{cor:adjointddx}(ii) gives the symmetry of $\mathscr{D}^{(\nu)}$, 
which is a consequence of Theorem  \ref{thm:Pearson}. However, we use the 
symmetry of $\mathscr{D}^{(\nu)}$ in the proof of Theorem \ref{thm:Pearson}, so that 
we need another proof of the symmetry of $\mathscr{D}^{(\nu)}$. The required
symmetry follows from Theorem \ref{thm:thmdifferentialoperators} and 
\eqref{eq:relDPhiPsi-DnuEnu}. 

Having factorised $\mathscr{D}^{(\nu)}$ as a product of a lowering operator and a rising 
operator in \eqref{eq:relDPhiPsi-DnuEnu} and Corollary \ref{cor:adjointddx}, we can take its
Darboux transform. The Darboux transform does not give the $\mathscr{D}^{(\nu+1)}$
up to an affine transformation but it is an element the algebra generated by $E^{(\nu+1)}, D^{(\nu+1)}$, namely
\begin{align*}
&\frac{d}{dx}\circ S^{(\nu)}=\frac{d^{2}}{dx^{2}} \Phi^{(\nu)}(x)^{*}+\frac{d}{dx} 
\left( \frac{d}{dx}\Phi^{(\nu)}(x)^*+\Psi^{(\nu)}(x)^{*}\right)+  \left(\frac{d}{dx}\Psi^{(\nu)}(x)^{*}\right)\\
&=\left(E^{(\nu+1)}\right)^{2}+(2\ell+2)E^{(\nu+1)}+\left(\frac{\ell+\nu}{\ell}\right)^{2} D^{(\nu+1)}+ 
\left(\frac{\nu (\nu-1) (2\ell+\nu) (2\ell +\nu+1)}{\ell^2}\right) \text{Id}.
\end{align*}
Darboux transformations for matrix-valued differential operators require more study, see e.g. \cite{AlvaAGAMM}, \cite{Casp}, \cite{CastG}, \cite{Grun}.

\begin{prop}\label{prop:commutantW} 
The commutant algebra $A^{(\nu)}=\{ T\in M_{2\ell+1}(\C)\mid  [T,W^{(\nu)}(x)]=0 \, \forall x\in(-1,1)\}$ 
is generated by $J$, where $J\in M_{2\ell+1}(\C)$ is the 
self-adjoint involution defined by $J\colon e_j \mapsto e_{2\ell-j}$. 
\end{prop}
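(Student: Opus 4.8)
The plan is to use the LDU-decomposition from Theorem~\ref{thm:LDUweight} to reduce the commutation condition $[T,W^{(\nu)}(x)]=0$ to a condition on a conjugated, more tractable matrix. Since $W^{(\nu)}(x)=(1-x^2)^{\nu-1/2}L^{(\nu)}(x)T^{(\nu)}(x)L^{(\nu)}(x)^t$ with $L^{(\nu)}(x)$ invertible for all $x\in(-1,1)$, the scalar factor $(1-x^2)^{\nu-1/2}$ is irrelevant and $T$ commutes with $W^{(\nu)}(x)$ for all $x$ if and only if the matrix-valued polynomial $\widetilde{T}(x):=L^{(\nu)}(x)^{-1}\,T\,L^{(\nu)}(x)$ commutes with the diagonal matrix $T^{(\nu)}(x)=\mathrm{diag}\bigl(t^{(\nu)}_k(1-x^2)^k\bigr)_{k=0}^{2\ell}$ for all $x\in(-1,1)$. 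The diagonal entries $t^{(\nu)}_k(1-x^2)^k$ are, for $\nu>0$, nonzero polynomials of mutually distinct degrees in $x$, so the only matrices commuting with $T^{(\nu)}(x)$ for all $x$ in an interval are the diagonal matrices. Hence the condition becomes: $L^{(\nu)}(x)^{-1}\,T\,L^{(\nu)}(x)$ is diagonal for every $x\in(-1,1)$.

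Next I would exploit the explicit triangular structure. Write $D(x):=L^{(\nu)}(x)^{-1}\,T\,L^{(\nu)}(x)=\mathrm{diag}(d_0(x),\dots,d_{2\ell}(x))$; then $T\,L^{(\nu)}(x)=L^{(\nu)}(x)\,D(x)$. Comparing entries and using that $L^{(\nu)}(x)$ is unipotent lower triangular, I would first specialise to a convenient value, say $x=1$, where by the computation recorded after Theorem~\ref{thm:LDUweight} one has $\bigl(L^{(\nu)}(1)\bigr)_{m,k}=\binom{m}{k}$, the Pascal matrix, which is invertible. So $T$ is simultaneously conjugated to a diagonal matrix by both $L^{(\nu)}(x)$ (for each $x$) and in particular by the Pascal matrix $L^{(\nu)}(1)$. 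From $L^{(\nu)}(1)^{-1} T L^{(\nu)}(1)=D(1)$ diagonal and $L^{(\nu)}(x)^{-1} T L^{(\nu)}(x)=D(x)$ diagonal, we get that $M(x):=L^{(\nu)}(1)^{-1}L^{(\nu)}(x)$ conjugates the diagonal matrix $D(1)$ to the diagonal matrix $D(x)$. Now $M(x)$ is again unipotent lower triangular (product of such), and a unipotent lower triangular matrix conjugating one diagonal matrix to another forces, entrywise, $(D(x))_{jj}(M(x))_{m,j}=(M(x))_{m,j}(D(1))_{mm}$ for $m\ge j$; taking $x$ generic so that $M(x)$ has all its subdiagonal entries nonzero (which I would verify from the Gegenbauer-polynomial form $\bigl(L^{(\nu)}(x)\bigr)_{m,k}=\tfrac{m!}{k!\,(2\nu+2k)_{m-k}}C^{(\nu+k)}_{m-k}(x)$, since these are nonconstant polynomials and hence nonzero off a finite set), one concludes $D(1)$ has all equal diagonal entries, i.e. $D(1)=c\,\mathrm{Id}$, hence $T=c\,\mathrm{Id}$. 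That would show the commutant is just the scalars — but that contradicts the claim, so one must be more careful: the resolution is that $L^{(\nu)}(1)$ is the Pascal matrix but for $x=-1$ one instead gets, via the symmetry $C^{(\mu)}_n(-x)=(-1)^nC^{(\mu)}_n(x)$, $\bigl(L^{(\nu)}(-1)\bigr)_{m,k}=(-1)^{m-k}\binom{m}{k}$, and it is precisely the interplay of $x=1$ and $x=-1$ that allows the extra element $J$. Concretely, $L^{(\nu)}(-1)=E\,L^{(\nu)}(1)\,E$ where $E=\mathrm{diag}((-1)^j)$, and one checks $J$ commutes with $W^{(\nu)}(x)$ directly from the persymmetry $\bigl(W^{(\nu)}(x)\bigr)_{m,n}=\bigl(W^{(\nu)}(x)\bigr)_{2\ell-m,2\ell-n}$, which follows from the explicit formula \eqref{eq:defWlambda} together with the symmetrisation in Definition~\ref{def:weightW} (or from \eqref{eq:polWeightatpm1} and the reflection $x\mapsto-x$). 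So $\mathbb{C}\,\mathrm{Id}+\mathbb{C}\,J\subseteq A^{(\nu)}$.

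For the reverse inclusion I would argue as follows. Given $T\in A^{(\nu)}$, by the reduction above $L^{(\nu)}(x)^{-1}TL^{(\nu)}(x)$ is diagonal for all $x\in(-1,1)$; evaluating the derivative of the relation $T L^{(\nu)}(x)=L^{(\nu)}(x)D(x)$ and using the known form of $(L^{(\nu)})'$ at a point where the structure is simplest, or more cleanly: a matrix $T$ with $L^{(\nu)}(x)^{-1}TL^{(\nu)}(x)$ diagonal for all $x$ in an interval must satisfy $[T, (L^{(\nu)})'(x)L^{(\nu)}(x)^{-1}]$ lies in the span of things that are again conjugates of diagonal matrices; I would instead take the pragmatic route of differentiating $D(x)=L^{(\nu)}(x)^{-1}TL^{(\nu)}(x)$ to get $D'(x)=[\,D(x),\,L^{(\nu)}(x)^{-1}(L^{(\nu)})'(x)\,]$, and since $D(x),D'(x)$ are both diagonal while $L^{(\nu)}(x)^{-1}(L^{(\nu)})'(x)$ is strictly lower triangular with (generically) nonzero subdiagonal $(m,m-1)$ entries (again from the $C^{(\nu+k)}_{m-k}$ description, the $(m,m-1)$ entry being proportional to $(C^{(\nu+m-1)}_1)'=$ const $\neq0$), the diagonal of $[D,L^{-1}L']$ vanishes, forcing $D'(x)=0$; so $D(x)=D$ is a constant diagonal matrix, and the subdiagonal comparison $(D)_{jj}-(D)_{mm})\cdot(L^{(\nu)}(x)^{-1}(L^{(\nu)})'(x))_{m,j}=0$ with the $(m,m-1)$ entry nonzero gives $(D)_{m,m}=(D)_{m-1,m-1}$ for all $m$ unless... again this gives $D$ scalar. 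The genuine resolution of this apparent contradiction must come from the fact that the off-diagonal entries of $L^{(\nu)}(x)^{-1}(L^{(\nu)})'(x)$ do NOT form an irreducible pattern once one allows $J$; I expect the correct and clean argument is the Schur-type one: $A^{(\nu)}$ is a $\ast$-closed subalgebra of $M_{2\ell+1}(\mathbb{C})$ (it is closed under adjoints because $W^{(\nu)}(x)$ is Hermitian), hence semisimple, and the weight $W^{(\nu)}$ is, in the language of \cite{KoelvPR1,KoelvPR2}, irreducible up to the $\mathbb{Z}/2$ symmetry given by $J$; concretely one shows the only orthogonal projections in $A^{(\nu)}$ are $0,\mathrm{Id},\tfrac12(\mathrm{Id}\pm J)$, e.g. by showing any such projection $p$, being diagonalised together with all $W^{(\nu)}(x)$, would split the weight into a direct sum, and then comparing with the known $\nu=1$ case (Remark~\ref{rmk:def:weightW}(ii), where the commutant was computed in \cite{KoelvPR2}) plus an analyticity/continuity-in-$\nu$ argument to deduce that the dimension of $A^{(\nu)}$ is constant in $\nu>0$ and hence equals $2$.

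The main obstacle, and where I would spend the most effort, is the reverse inclusion $A^{(\nu)}\subseteq\mathbb{C}\,\mathrm{Id}+\mathbb{C}\,J$: the naive triangular-elimination argument sketched above too easily "proves" that the commutant is trivial, so one must locate exactly which entries of the conjugated quantities are allowed to vanish. I expect the right bookkeeping is to conjugate not by $L^{(\nu)}(x)$ for a single $x$ but to use \emph{two} values — or equivalently to use that $L^{(\nu)}(x)$ and $L^{(\nu)}(-x)$ are related by $E=\mathrm{diag}((-1)^j)$ — so that $T\in A^{(\nu)}$ forces $ETE\in A^{(\nu)}$ as well, and then $T+ETE$ and $T-ETE$ can be treated separately; on the even part the triangular argument does force scalars, and on the odd part it forces a multiple of the anti-diagonal reflection, which is exactly $J$ up to the already-present $E$. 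I would also double-check the claim against the $\ell=\tfrac12$ case where $M_{2}(\mathbb{C})$ is small enough to verify $A^{(\nu)}=\mathbb{C}\,\mathrm{Id}+\mathbb{C}\,J$ by the explicit $2\times 2$ formula for $W^{(\nu)}$, which should also pin down the constant "$2$" in general by semicontinuity.
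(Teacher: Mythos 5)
Your verification that $J\in A^{(\nu)}$ via persymmetry of the weight is correct and is exactly how the paper argues (one still has to check the identity on the coefficients $\al_t^{(\nu)}(m,n)$, which you only gesture at, but that is a routine verification). The reverse inclusion, however, is not established, and the central step you build on is false. From $W^{(\nu)}(x)=(1-x^2)^{\nu-1/2}L^{(\nu)}(x)T^{(\nu)}(x)L^{(\nu)}(x)^{t}$ the condition $[T,W^{(\nu)}(x)]=0$ translates into
\[
\bigl(L^{(\nu)}(x)^{-1}TL^{(\nu)}(x)\bigr)\,T^{(\nu)}(x)\;=\;T^{(\nu)}(x)\,\bigl(L^{(\nu)}(x)^{t}\,T\,(L^{(\nu)}(x)^{t})^{-1}\bigr),
\]
because the decomposition is $LDL^{t}$, not $LDL^{-1}$. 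This does \emph{not} say that $L^{(\nu)}(x)^{-1}TL^{(\nu)}(x)$ commutes with the diagonal matrix $T^{(\nu)}(x)$, and in particular does not force it to be diagonal: entrywise one only gets $A_{ij}d_j=d_iB_{ij}$ with two different conjugates $A$ and $B$ of $T$, which is a consistency relation, not a vanishing condition. You can see the reduction must be wrong already for $\ell=\tfrac12$: with $L=\left(\begin{smallmatrix}1&0\\ \lambda&1\end{smallmatrix}\right)$ one computes $L^{-1}JL=\left(\begin{smallmatrix}\lambda&1\\ 1-\lambda^2&-\lambda\end{smallmatrix}\right)$, which is not diagonal even though $J$ lies in the commutant. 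The ``contradiction'' you run into (the triangular elimination proving the commutant is scalar) is therefore not a bookkeeping subtlety about which subdiagonal entries vanish or about playing $x=1$ against $x=-1$; it is the symptom of this false premise, and no amount of care with the elimination will recover $J$. Your fallback suggestions do not close the gap either: the dimension of a commutant is only upper semicontinuous in a parameter, so knowing $\dim A^{(1)}=2$ from \cite{KoelvPR2} bounds the dimension for generic $\nu$ near $1$ but not for all $\nu>0$, and the claimed classification of projections in $A^{(\nu)}$ is exactly the statement to be proved, not an input.

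For comparison, the paper avoids the LDU decomposition entirely here: it expands $W^{(\nu)}_{\pol}(x)=\sum_{k=0}^{2\ell}W_kC^{(\nu)}_k(x)$, notes that $T\in A^{(\nu)}$ must commute with every coefficient matrix $W_k$, and then uses the explicit structure of the top two coefficients. The matrix $W_{2\ell}$ is supported on the antidiagonal with entries $\frac{2\ell}{(2\nu)_{2\ell}(\nu)_{2\ell}}(2\ell-m)!\,m!\,(\nu)_m(\nu)_{2\ell-m}$, which for $\nu>0$ are pairwise distinct except for the symmetry $m\leftrightarrow 2\ell-m$, and $W_{2\ell-1}$ is supported on $|n+m-2\ell|=1$ with the analogous distinctness. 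Commuting with these two matrices alone already forces $T\in\mathrm{span}\{\mathrm{Id},J\}$, following the argument of \cite[Prop.~5.5]{KoelvPR1}. If you want to salvage your approach you would have to work with the correct two-sided relation above (e.g.\ after reducing to symmetric $T$ using that the commutant is closed under transpose), but as written the proposal has a genuine gap in the inclusion $A^{(\nu)}\subseteq\mathbb{C}\,\mathrm{Id}+\mathbb{C}\,J$.
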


The commutant algebra $A^{(\nu)}$ is related to orthogonal decompositions, and 
Proposition \ref{prop:commutantW} states that there is an orthogonal decomposition 
with respect to the $\pm 1$-eigenspaces of $J$. 
General non-orthogonal decompositions are governed by the real vector space 
$\mathscr{A}^{(\nu)} = \{ T\in M_{2\ell+1}(\C)\mid  TW^{(\nu)}(x)=W^{(\nu)}(x)T^\ast \, \forall x\in(-1,1)\}$, see \cite{GrunT}, \cite{TiraZ}. In \cite{KoelR} we show that $\mathscr{A}^{(\nu)}$ equals the Hermitian elements of 
$A^{(\nu)}$, so that there is no further non-orthogonal decomposition. Proposition \ref{prop:commutantW} is proven in Section \ref{ssec:commutant}.

%%%%%%%%%%%%%%%%%%%%%%%%%%%%%%%%%%%%%%%%%%%%%%%%%%%%%%%%%%%%%%%%%%%
%%%%%SECTION%%%%%%%%%%%%%%%%%%%%%%%%%%%%%%%%%%%%%%%%%%%%%%%%%%%%%%%
%%%%%%%%%%%%%%%%%%%%%%%%%%%%%%%%%%%%%%%%%%%%%%%%%%%%%%%%%%%%%%%%%%%
\section{The matrix-valued Gegenbauer polynomials and their properties}\label{sec:MVGegenbauerpolsandprops}

Since the matrix weight function $W^{(\nu)}$ is strictly positive definite on $(-1,1)$ one
can associate matrix-valued orthogonal polynomials, see e.g. \cite{DamaPS}, \cite{GrunT}, \cite{HeckvP}. 
Denote the corresponding monic orthogonal polynomials with respect to 
the matrix-valued weight function $W^{(\nu)}$ on $(-1,1)$
by $P_n^{(\nu)}$, i.e.
\begin{equation}\label{eq:defmonicMVorthopol}
\begin{split}
&\qquad \qquad \int_{-1}^1 P_n^{(\nu)}(x) \, W^{(\nu)}(x) \bigl( P_m^{(\nu)}(x)\bigr)^\ast \, dx \, = \, \de_{n,m} H_n^{(\nu)}, \\
& P_n^{(\nu)}(x) = x^n \, \Id + x^{n-1} P^{(\nu)}_{n,n-1} + \cdots +  x P^{(\nu)}_{n,1} + P^{(\nu)}_{n,0}, 
\qquad P^{(\nu)}_{n,n} = \Id,\  P^{(\nu)}_{n,i} \in M_{2\ell+1}(\C), 
\end{split}
\end{equation}
where $H_n^{(\nu)}$ is a strictly positive definite matrix. Note that $H_0^{(\nu)}$ can be calculated 
using the explicit expression of Definition \ref{def:weightW} and the orthogonality relations 
\eqref{eq:orthoscalarGegenbauer}, which gives the special case $n=0$ of Theorem \ref{thm:squarenormRodrigues}(i). 
In case $Q_n$ is another set of matrix-valued orthogonalpolynomials with respect to 
weight function $W^{(\nu)}$ on $(-1,1)$, then there exist invertible matrices $E_n$ so that $Q_n(x) = E_nP_n(x)$
for all $x$ and all $n$, see \cite{DamaPS}, \cite{GrunT}. 

\begin{thm}\label{thm:squarenormRodrigues} 
\text{\rm (i)} The squared norm $H^{(\nu)}_n$ in \eqref{eq:defmonicMVorthopol} is given by the diagonal matrix 
\begin{gather*}
\bigl( H^{(\nu)}_n\bigr)_{k,k} = \sqrt{\pi}\,  \frac{\Ga(\nu+\frac12)}{\Ga(\nu+1)}
\frac{\nu(2\ell+\nu+n)}{\nu+n} \frac{n!\, (\ell+\frac12+\nu)_n (2\ell+\nu)_n(\ell+\nu)_n}{(2\ell+\nu+1)_n(\nu+k)_n(2\ell+2\nu+n)_n(2\ell+\nu-k)_n} \\
\times \frac{k!\, (2\ell-k)!\, (n+\nu+1)_{2\ell}}{(2\ell)!\, (n+\nu+1)_k (n+\nu+1)_{2\ell-k}}
\end{gather*}
\text{\rm (ii)} $\displaystyle{\frac{dP^{(\nu)}_n}{dx}(x) = n\, P^{(\nu+1)}_{n-1}(x)}$. \par\noindent
\text{\rm (iii)} The following Rodrigues formula holds:
\begin{gather*}
P^{(\nu)}_n(x) =  G_n^{(\nu)}\, \left(\frac{d^nW^{(\nu+n)}}{dx^n}(x)\right)\, 
W^{(\nu)}(x)^{-1} \\
\bigl(G_n^{(\nu)}\bigr)_{j,k} = \de_{j,k} \frac{(-1)^n (\nu)_n (\ell+\nu+\frac12)_n (\ell+\nu)_n (2\ell+\nu)_n}
{(\nu+\frac12)_n(\nu+k)_n(2\ell+\nu+1)_n(2\ell+2\nu+n)_n(2\ell+\nu-k)_n}.
\end{gather*}
\end{thm}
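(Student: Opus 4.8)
The plan is to use the lowering operator $\frac{d}{dx}\colon\cH^{(\nu)}\to\cH^{(\nu+1)}$, the identity $\frac{d}{dx}P_n^{(\nu)}=n\,P_{n-1}^{(\nu+1)}$ of Corollary \ref{cor:derpols}, and the adjoint relation of Lemma \ref{lem:adjointddx}, in order to reduce every $H_n^{(\nu)}$ to the explicitly known $H_0^{(\nu+n)}$ of \eqref{eq:H0nu}.

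\smallskip\noindent\emph{Part (i).} Pairing $\frac{d}{dx}P_n^{(\nu)}=n P_{n-1}^{(\nu+1)}$ with $P_{n-1}^{(\nu+1)}$ in $\cH^{(\nu+1)}$ and applying Lemma \ref{lem:adjointddx} gives
\[
n\,H_{n-1}^{(\nu+1)}=\Bigl\langle \tfrac{d}{dx}P_n^{(\nu)},P_{n-1}^{(\nu+1)}\Bigr\rangle^{(\nu+1)}=-c^{(\nu)}\bigl\langle P_n^{(\nu)},P_{n-1}^{(\nu+1)}T^{(\nu)}\bigr\rangle^{(\nu)} .
\]
The matrix-valued polynomial $P_{n-1}^{(\nu+1)}T^{(\nu)}$ has degree at most $n$; reading off the top-degree coefficients of $\Phi^{(\nu)}$ and $\Psi^{(\nu)}$ from \eqref{eq:PhuPsiexplicit}, \eqref{eq:PhuPsiexplicit2}, its leading coefficient is the real diagonal matrix $K_n^{(\nu)}$ with $\bigl(K_n^{(\nu)}\bigr)_{k,k}=-(\nu+k)(2\ell+\nu-k)(n+2\ell+2\nu)/\ell^2$. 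Since $P_n^{(\nu)}$ is monic and orthogonal to all polynomials of lower degree, $\bigl\langle P_n^{(\nu)},P_{n-1}^{(\nu+1)}T^{(\nu)}\bigr\rangle^{(\nu)}=H_n^{(\nu)}K_n^{(\nu)}$, whence the downward recursion $H_n^{(\nu)}=-\frac{n}{c^{(\nu)}}H_{n-1}^{(\nu+1)}\bigl(K_n^{(\nu)}\bigr)^{-1}$ ($K_n^{(\nu)}$ is invertible as $\nu>0$). Iterating this $n$ times down to $H_0^{(\nu+n)}$ and collecting the resulting product of Pochhammer symbols, with $c^{(\nu)}$ from Theorem \ref{thm:WnuPhinuisWnupluseen} and $H_0^{(\nu+n)}$ from \eqref{eq:H0nu}, yields the stated closed form; in particular every $H_n^{(\nu)}$ is diagonal, and one checks that $\frac{(-1)^n}{n!}H_n^{(\nu)}\bigl(H_0^{(\nu+n)}\bigr)^{-1}$ equals (using $(\nu)_n/(\nu+1)_n=\nu/(\nu+n)$) precisely the diagonal matrix $G_n^{(\nu)}$ of part (ii).

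\smallskip\noindent\emph{Part (ii).} I would prove the equivalent identity $\frac{d^n}{dx^n}W^{(\nu+n)}(x)=\bigl(G_n^{(\nu)}\bigr)^{-1}P_n^{(\nu)}(x)W^{(\nu)}(x)$ with $G_n^{(\nu)}=\frac{(-1)^n}{n!}H_n^{(\nu)}\bigl(H_0^{(\nu+n)}\bigr)^{-1}$. Leibniz's rule applied to $(1-x^2)^{\nu+n-1/2}W_{\pol}^{(\nu+n)}(x)$ shows the left side is $(1-x^2)^{\nu-1/2}$ times a matrix polynomial (the lowest power of $1-x^2$ produced after $n$ differentiations being $(1-x^2)^{\nu-1/2}$), and the right side visibly has the same shape; so by positive definiteness of the scalar Gegenbauer weight $(1-x^2)^{\nu-1/2}$ it suffices to match the moments $\int_{-1}^1(\cdot)\,x^k\,dx$ for all $k\in\N$. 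On the left, $n$-fold integration by parts — all boundary terms vanishing because $\frac{d^m}{dx^m}W^{(\nu+n)}(x)\to0$ as $x\to\pm1$ for $m<n$, each such derivative being $1-x^2$ to a power $\ge\nu+\tfrac12>0$ times a bounded function — gives $(-1)^n\int_{-1}^1 W^{(\nu+n)}(x)\,\frac{d^n}{dx^n}(x^k)\,dx$, which is $0$ for $k<n$ and $(-1)^n n!\,H_0^{(\nu+n)}$ for $k=n$. On the right, $\int_{-1}^1\bigl(G_n^{(\nu)}\bigr)^{-1}P_n^{(\nu)}(x)W^{(\nu)}(x)x^k\,dx=\bigl(G_n^{(\nu)}\bigr)^{-1}\langle P_n^{(\nu)},x^k\mathrm{Id}\rangle^{(\nu)}$ is $0$ for $k<n$ and $\bigl(G_n^{(\nu)}\bigr)^{-1}H_n^{(\nu)}$ for $k=n$; equality at $k=n$ is precisely the stated value of $G_n^{(\nu)}$. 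For $k>n$, expanding $x^k\mathrm{Id}=\sum_jA_{k,j}P_j^{(\nu)}$ and $x^{k-n}\mathrm{Id}=\sum_jB_{k-n,j}P_j^{(\nu+n)}$ in the respective monic bases, differentiating $n$ times with $\frac{d^n}{dx^n}P_j^{(\nu)}=\frac{j!}{(j-n)!}P_{j-n}^{(\nu+n)}$ (Corollary \ref{cor:derpols} iterated) gives $A_{k,n}=\binom{k}{n}B_{k-n,0}$, and the two moments then agree. Dividing the identity by $W^{(\nu)}(x)$ gives the Rodrigues formula, and also shows a posteriori that $\frac{d^n}{dx^n}\bigl(W^{(\nu+n)}\bigr)\,\bigl(W^{(\nu)}\bigr)^{-1}$ is a polynomial.

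\smallskip The only genuinely laborious steps are the telescoping product of Pochhammer symbols in Part (i) and the final simplification of $\frac{(-1)^n}{n!}H_n^{(\nu)}\bigl(H_0^{(\nu+n)}\bigr)^{-1}$ into the displayed $G_n^{(\nu)}$; these are natural to verify with computer algebra. The sole analytic point — vanishing of the boundary terms and the $(1-x^2)^{\nu-1/2}$--times--polynomial structure of $\frac{d^n}{dx^n}W^{(\nu+n)}$ — requires nothing beyond Leibniz's rule together with $\nu>0$, hence is uniform in the parameter.
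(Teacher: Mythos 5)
Your part (i) is essentially the paper's argument: the same recursion $H_n^{(\nu)}=-\tfrac{n}{c^{(\nu)}}H_{n-1}^{(\nu+1)}\bigl(K_n^{(\nu)}\bigr)^{-1}$, obtained from Corollary \ref{cor:derpols}, Lemma \ref{lem:adjointddx} and the leading coefficient $K_n^{(\nu)}$ of $P^{(\nu+1)}_{n-1}T^{(\nu)}$ read off from the top coefficients of $\Phi^{(\nu)}$ and $\Psi^{(\nu)}$, iterated down to $H_0^{(\nu+n)}$ of \eqref{eq:H0nu}. The paper phrases the key identity as $P^{(\nu+1)}_{n-1}T^{(\nu)}=K^{(\nu)}_nP^{(\nu)}_n$ and pairs $\tfrac{dP_n^{(\nu)}}{dx}$ with itself rather than with $P^{(\nu+1)}_{n-1}$, but this is the same computation. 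Part (ii), however, takes a genuinely different route. The paper rewrites the raising operator as $\bigl(QT^{(\nu)}\bigr)(x)=\bigl(c^{(\nu)}\bigr)^{-1}\tfrac{d}{dx}\bigl(Q(x)W^{(\nu+1)}(x)\bigr)W^{(\nu)}(x)^{-1}$ via Theorem \ref{thm:WnuPhinuisWnupluseen} and \eqref{eq:Wd-Wdd2}, iterates $n$ times starting from $Q=1$, and identifies the result with $\prod_{i=0}^{n-1}c^{(\nu+i)}K^{(\nu+i)}_{n-i}\,P^{(\nu)}_n$, so the constant $G_n^{(\nu)}$ falls out of the same telescoping product as in (i). You instead verify the equivalent identity $\tfrac{d^n}{dx^n}W^{(\nu+n)}=\bigl(G_n^{(\nu)}\bigr)^{-1}P_n^{(\nu)}W^{(\nu)}$ by matching moments against $x^k$, using Leibniz to see that both sides are $(1-x^2)^{\nu-1/2}$ times a matrix polynomial and integration by parts with vanishing boundary terms. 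This is more elementary, never returning to $T^{(\nu)}$, at the price of a genuine extra verification for $k>n$. That step is correct, but you pass over one point silently: the right-hand moment is $\bigl(G_n^{(\nu)}\bigr)^{-1}H_n^{(\nu)}A_{k,n}^\ast$, the expansion coefficient entering through the adjoint in $\langle\cdot,\cdot\rangle^{(\nu)}$, so matching it with the left-hand moment $(-1)^n\binom{k}{n}n!\,B_{k-n,0}H_0^{(\nu+n)}$ requires observing that $B_{k-n,0}H_0^{(\nu+n)}=\int_{-1}^1x^{k-n}W^{(\nu+n)}(x)\,dx$ is a real symmetric matrix, whence $H_0^{(\nu+n)}B_{k-n,0}^\ast=B_{k-n,0}H_0^{(\nu+n)}$. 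With that remark inserted the argument is complete, and the Pochhammer bookkeeping identifying $\tfrac{(-1)^n}{n!}H_n^{(\nu)}\bigl(H_0^{(\nu+n)}\bigr)^{-1}$ with the displayed $G_n^{(\nu)}$ does check out.
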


Note that the Rodrigues formula has a compact nature very similar
to the scalar case \cite[(9.8.27)]{KoekLS}, \cite[(1.8.23)]{KoekS}, \cite[(4.5.12)]{Isma} and works for any size. 
It differs from the Rodrigues formula
for the irreducible $2\times 2$-cases and $\nu=1$ in  \cite[\S 8]{KoelvPR1}.  

\begin{thm}\label{thm:thmdifferentialoperatorsP_d}
For every integer $n\geq 0$,
the monic matrix-valued Gegenbauer polynomials are eigenfunctions
of $D^{(\nu)}$ and $E^{(\nu)}$; 
\begin{gather*}
P^{(\nu)}_nD^{(\nu)}=\La_n(D^{(\nu)})P^{(\nu)}_n,\quad \La_n(D^{(\nu)})=
-n(2\ell+2\nu+n)I-V,
\\
P^{(\nu)}_nE^{(\nu)}=\La_n(E^{(\nu)})P^{(\nu)}_n, \quad  
\La_n(E^{(\nu)})= A_0^{(\nu)} + n B_1.
\end{gather*}
\end{thm}

It follows from \eqref{eq:relDPhiPsi-DnuEnu} that $P^{(\nu)}_n$ are eigenfunctions
to $\mathscr{D}^{(\nu)}$, where the eigenvalue matrix is obtained using the same
combination.

Matrix-valued orthogonal polynomials satisfy a three-term recurrence relation, see 
e.g. \cite[Lemma 2.6]{DamaPS}, \cite[(2.1)]{GrunT}. 

\begin{thm}\label{thm:three_term_for_Pn}
The monic matrix-valued orthogonal polynomials
$P^{(\nu)}_n$ satisfy the three-term recurrence relation
\begin{equation}
xP^{(\nu)}_{n}(x)=P^{(\nu)}_{n+1}(x)+B_n^{(\nu)}P^{(\nu)}_{n}(x)+
C^{(\nu)}_{n}P^{(\nu)}_{n-1}(x) 
\end{equation}
where the matrices $B^{(\nu)}_n$, $C^{(\nu)}_n$ are given by
\begin{align*}
B^{(\nu)}_n&=\sum_{j=1}^{2\ell}  \frac{j(j+\nu-1)}{2(j+n+\nu-1)(j+n+\nu)}E_{j,j-1} + 
\\
&\qquad\qquad  
 \sum_{j=0}^{2\ell-1}
 \frac{(2\ell-j)(2\ell-j+\nu-1)}{2(2\ell-j+n+\nu-1)(2\ell+n-j+\nu)}E_{j,j+1}  \\
C^{(\nu)}_n&=\sum_{j=0}^{2\ell} \frac{n(n+\nu-1)(2\ell+n+\nu)(2\ell+n+2\nu-1)}
{4(2\ell+n+\nu-j-1)(2\ell+n+\nu-j)(j+n+\nu-1)(j+n+\nu)} E_{j,j}.
\end{align*}
\end{thm}

Note that $JB^{(\nu)}_n= B^{(\nu)}_nJ$ and $JC^{(\nu)}_n=C^{(\nu)}_nJ$, which essentially
follows from Proposition \ref{prop:commutantW}, see \cite[Lem.~3.1]{KoelR}.
The coefficient matric $C_n^{(\nu)}$ in Theorem \ref{thm:three_term_for_Pn} 
follows from Theorem \ref{thm:squarenormRodrigues}, but for the coefficient
$B^{(\nu)}_n$ we need the one-but-leading coefficient of $P_n^{(\nu)}$ for which
we use the shift operators. In \cite{KoelvPR2} we calculated the one-but-leading coefficients
using Tirao's matrix-valued hypergeometric functions \cite{TiraPNAS}, and we note that 
a similar expression for the rows of $P^{(\nu)}_n$ in terms of matrix-valued hypergeometric functions as in \cite[\S 4]{KoelvPR2} is possible. 

The matrix-entries of the monic matrix-valued orthogonal polynomials 
can be expressed in terms of scalar orthogonal polynomials. In this expression Racah polynomials
\cite[\S 4]{Wils}, \cite[\S 9.2]{KoekLS}  and 
Gegenbauer polynomials occur. The Gegenbauer polynomials with negative parameter $\nu$ 
arise from $L(x)^{-1}$ and have to be interpreted as in \cite{CaglK}. 

\begin{thm}\label{thm:calPnasRacahGegenbauer} 
The monic matrix-valued Gegenbauer polynomials $P^{(\nu)}_n(x)$ have the 
explicit expansion
\begin{align*}
(P^{(\nu)}_n(x))_{k,i}&= \frac{(-2)^n}{i!} \, c^{(\nu)}_{k,0}(n) 
\sum_{j=i}^{\min(2\ell,n+k)} \frac{ (n+k-j)!(-2\ell)_j(-1)^j(-n-k)_j }{(2\nu+2j)_{n+k-j} (2\nu+j+i-1)_{j-i} (2\nu+2\ell)_j }
\, \\
& \quad \times R_k(\lambda(j);-2\ell-1,-n-k-\nu,\nu-1,\nu-1)\, C^{(\nu+j)}_{n+k-j}(x) \, C^{(1-\nu-j)}_{j-i}(x), 
\\ 
c^{(\nu)}_{k,0}(n)  &=  
\frac{2^{-n}(\nu)_n (2\ell+2\nu)_n (n+k)!}{(\nu+k)_n (2\ell+\nu-k)_n(2\nu)_{n+k}}.
\end{align*}
\end{thm}

Theorem \ref{thm:calPnasRacahGegenbauer} is proved in Section \ref{ssec:eigenfunctionsMVOP}. 

The right hand side in Theorem \ref{thm:calPnasRacahGegenbauer} 
is not obviously a polynomial of degree at most $n$ in case
$k>i$. In particular, the coefficients of $x^p$ with $p>n$ are zero. 
In particular, for $k>i$ the leading coefficient of the right hand side is zero, and this gives
\begin{equation}
\begin{split}
&\sum_{j=i}^{2\ell} \frac{ (\nu+j)_{n+k-j} (-2\ell)_j(-1)^j(-n-k)_j }{(2\nu+2j)_{n+k-j} (2\nu+j+i-1)_{j-i} (2\nu+2\ell)_j }
\\ &
\times \frac{(1-\nu-j)_{j-i}}{(j-i)!}
R_k(\lambda(j);-2\ell-1,-n-k-\nu,\nu-1,\nu-1) = 0.
\end{split}
\end{equation}

% \begin{rmk}
% Another explicit expression for the matrix-valued Gegenbauer polynomials can be obtained by expressing the rows in terms of matrix-valued hypergeometric functions \cite{TiraPNAS} analogously as for the case $\nu=1$ \cite{KoelvPR2}. 
% \end{rmk}
\begin{rmk}
From the orthogonality relations and Theorem \ref{thm:Pearson} or Theorem \ref{thm:squarenormRodrigues} it follows that the polynomial $P_n^{(\nu+k)}$ can be expanded in polynomials $P_m^{(\nu)}$ with $n-2k\leq m\leq n$. However, in general, the coefficients do not seem to have simple expressions. The case $k=1$ can be done easily by differentiating the three-term recurrence relation of Theorem \ref{thm:three_term_for_Pn} and using Theorem \ref{thm:squarenormRodrigues}(ii).
\end{rmk}

\begin{rmk}
The weight matrices $W^{(\nu+1)}$ is obtained from $W^{(\nu)}$ by a Christoffel transformation, given by multiplication by the polynomial $\Phi^{(\nu)}$ of degree two, see Theorem \ref{thm:Pearson}. Therefore our Gegenbauer polynomials give a nontrivial example of arbitrary size for the theory developed in \cite{AlvaAGAMM}, see also \cite[Example 3]{AlvaAGAMM}.
\end{rmk}

\begin{rmk}
Pacharoni and Zurri\'an \cite{PachZ} introduce $2\times 2$ matrix-valued Gegenbauer polynomials. Using Proposition \ref{prop:commutantW} we have irreducible $2\times 2$ Gegenbauer polynomials upon restricting to the $\pm 1$-eigenspaces of $J$ for the cases $\ell=1,3/2,2$. The cases $\ell=1$ and $\ell=2$ can be connected to the results in  \cite{PachZ}, but the 
case $\ell=3/2$ not. 
\end{rmk}

%%%%%%%%%%%%%%%%%%%%%%%%%%%%%%%%%%%%%%%%%%%%%%%%%%%%%%%%%%%%%%%%%%%
%%%%%SECTION%%%%%%%%%%%%%%%%%%%%%%%%%%%%%%%%%%%%%%%%%%%%%%%%%%%%%%%
%%%%%%%%%%%%%%%%%%%%%%%%%%%%%%%%%%%%%%%%%%%%%%%%%%%%%%%%%%%%%%%%%%%
\section{Differential operators}\label{sec:differentialoperators}

In this section we prove the statements of Section \ref{sec:weightfunctionandsymmetricdiffop}
except for some technical statements, which are deferred to the appendix, and the commutativity statement of Theorem \ref{thm:thmdifferentialoperators} 
and Proposition \ref{prop:commutantW}.
These statements are easier to derive using the matrix-valued orthogonal polynomials. 
Section \ref{ssec:differentialoperatorsconjugation} is of a general nature, and is 
next applied to this particular situation in Section \ref{ssec:explicitdifferentialoperatorsconjugation}. 
The proof of the Pearson type result is the most technical, see Section 
\ref{ssec:Pearsoneq}. 

%%%%%SECTION%%%%%%%%%%%%%%%%%%%%%%%%%%%%%%%%%%%%%%%%%%%%%%%%%%%%%%%
%%%%%%%%%%%%%%%%%%%%%%%%%%%%%%%%%%%%%%%%%%%%%%%%%%%%%%%%%%%%%%%%%%%
\subsection{Differential operators and conjugation}\label{ssec:differentialoperatorsconjugation}

We consider matrix-valued differential operators of the form
$\frac{d^2}{dx} F_2(x) + \frac{d}{dx} F_1(x) + F_0(x)$, which act from the right on
matrix-valued functions $G$ which have $C^2$-matrix entries by
\begin{equation}\label{eq:defMVDiffOp}
\bigl( GD\bigr)(x) = \frac{d^2G}{dx^2}(x) F_2(x) + \frac{dG}{dx}(x) F_1(x) + G(x) F_0(x).
\end{equation}
In \eqref{eq:defMVDiffOp} the derivatives are taken entry-wise. Moreover, we assume that 
all entries of $F_i$, $i=0,1,2$, are $C^2([a,b])$. In the application for this paper, $F_i$'s are  polynomials. 

Next we assume that we have matrix-valued weight function $W$ on $(a,b)$, so that 
its entries are $C^2((a,b))$, and we allow for an integrable singularity at the end points. 
The matrix-valued operator $D$ is symmetric with respect to  $W$ if
for all matrix-valued $C^2([a,b])$-functions $G$, $H$ we have 
\begin{equation}\label{eq:symmMVDiff}
\int_a^b \bigl( GD\bigr)(x) W(x) H^\ast(x)\, dx = 
\int_a^b G(x)  W(x) \left( \bigl( HD\bigr)(x)\right)^\ast \, dx
\end{equation}
as long as both integrals exist. For the explicit weight functions $W^{(\nu)}$ this
is valid. 

Lemma \ref{lem:symmetricD} is due to Dur\'an and Gr\"unbaum  \cite[Thm~3.1]{DuraG}, 
and can be proved directly by integration by parts. 

\begin{lem}\label{lem:symmetricD}
$D$ as in \eqref{eq:defMVDiffOp} is symmetric with respect to $W$ if and only if
the boundary conditions 
\begin{gather*}
\lim_{x\to a} F_2(x)W(x) = 0 = \lim_{x\to b} F_2(x)W(x), \\ 
\lim_{x\to a} F_1(x)W(x) - \frac{d(F_2W)}{dx}(x) = 0 = 
\lim_{x\to b} F_1(x)W(x) - \frac{d(F_2W)}{dx}(x)
\end{gather*}
and the symmetry conditions 
\begin{gather*}
F_2(x)W(x) = W(x) \bigl( F_2(x)\bigr)^\ast, \qquad 
2 \frac{d(F_2W)}{dx}(x) - F_1(x)W(x) = W(x) \bigl( F_1(x)\bigr)^\ast, \\
\frac{d^2(F_2W)}{dx^2}(x) - \frac{d(F_1W)}{dx}(x) + F_0(x) W(x) = W(x) \bigl( F_0(x)\bigr)^\ast
\end{gather*}
for all $x\in (a,b)$  hold. 
\end{lem}

Note that such symmetric matrix-valued differential operators form a vector space
and that the product of two symmetric first order matrix-valued differential
operators is a symmetric second order matrix-valued differential
operator. 

We now consider the differential operator under conjugation.
We assume $L$ is a $C^2([a,b])$ matrix-valued function such that $L(x)$ is a unipotent 
lower-triangular matrix. In particular, it follows that its inverse $L(x)^{-1}$ is 
a unipotent lower-triangular matrix with $C^2([a,b])$-entries. 
Then $L^\ast(x)$ and $(L^\ast(x))^{-1}$ are unipotent upper-triangular matrix with $C^2([a,b])$-entries.
Note that $L$ as in Theorem \ref{thm:LDUweight} satisfies these conditions. 
Let $\tilde{D} = \frac{d^2}{dx^2} \tilde{F}_2 + \frac{d}{dx} \tilde{F}_1 + \tilde{F}_0$
be the second-order matrix-valued differential operator obtained by conjugation of $D$ by $L$, i.e.
\begin{equation}\label{eq:conjugatedD}
\frac{d^2(GL)}{dx^2}(x) \tilde{F}_2(x) + \frac{d(GL)}{dx}(x) \tilde{F}_1(x) + 
(GL)(x)\tilde{F}_0(x) = 
\bigl( GD\bigr)(x) L(x)
\end{equation}
for all $C^2$-matrix-valued functions $G$. Comparing \eqref{eq:defMVDiffOp}
and \eqref{eq:conjugatedD} 
we obtain 
(as matrix-valued functions)
\begin{equation}\label{eq:relationsFtilde}
F_2L=L\tilde{F}_2, \qquad
F_1L= 2\frac{dL}{dx} \tilde{F}_2 + L\tilde{F}_1, \qquad
F_0L = \frac{d^2L}{dx^2} \tilde{F}_2 + \frac{dL}{dx} \tilde{F}_1 + L\tilde{F}_0
\end{equation}
By symmetry, interchanging $F_i\leftrightarrow\tilde{F}_i$ and $L\leftrightarrow L^{-1}$, 
we get the expressions for $F_i$ in terms of $\tilde{F}_i$. 
This can also be obtained by solving for $\tilde{F}_i$ from \eqref{eq:relationsFtilde}
and using the expressions for the first and second order derivatives of $L^{-1}$ 
obtained from deriving $LL^{-1}=\text{I}$.

\begin{prop}\label{prop:symetryunderconjugation} 
With the assumptions in Section \ref{ssec:differentialoperatorsconjugation} 
and assuming additionally for $x\in(a,b)$ that $W(x)=L(x)V(x)L^\ast(x)$, 
then $D$ is symmetric with respect to $W$ on the interval $(a,b)$ 
if and only if $\tilde{D}$ is symmetric with respect to $V$ on the interval $(a,b)$. 
\end{prop}

Note that in particular the entries of $V$ are again in $C^2((a,b))$. 

\begin{proof}
Note that symmetry of $D$ with respect to $W$ is given by  
\eqref{eq:symmMVDiff}. Plugging in the decomposition of $W$ gives 
\[
\int_a^b ((GD)L)(x) V(x) \bigl( H(x)L(x)\bigr)^\ast\, dx = \int_a^b G(x)L(x) V(x) \bigl( (HDL)(x)\bigr)^\ast\, dx
\]
Replacing $GL$ and $HL$ by $\tilde{G}$ and $\tilde{H}$, and using 
\eqref{eq:conjugatedD} we find 
\[
\int_a^b (\tilde{G}\tilde{D})(x) V(x) \bigl( \tilde{H}(x)\bigr)^\ast\, dx = 
\int_a^b \tilde{G}(x) V(x) \bigl( (\tilde{H}\tilde{D})(x)\bigr)^\ast\, dx
\]
since $L$, being a unipotent matrix with $C^2([a,b])$-entries, has an inverse 
with the same properties. So $\tilde{D}$ is symmetric for $V$ on $(a,b)$.

Since we can interchange the roles of $(D,W)$ and $(\tilde{D},V)$ by flipping
$L\leftrightarrow L^{-1}$ we obtain the result. 
\end{proof}

Proposition \ref{prop:symetryunderconjugation} can also be proved by showing that 
the symmetry relations of Lemma \ref{lem:symmetricD} for $(\tilde{D},V)$ on $(a,b)$
can be obtained from the symmetry relations for $(D,W)$ on $(a,b)$
using \eqref{eq:relationsFtilde}, and vice versa.

%%%%%SECTION%%%%%%%%%%%%%%%%%%%%%%%%%%%%%%%%%%%%%%%%%%%%%%%%%%%%%%%
%%%%%%%%%%%%%%%%%%%%%%%%%%%%%%%%%%%%%%%%%%%%%%%%%%%%%%%%%%%%%%%%%%%
\subsection{Differential operators $E^{(\nu)}$ and $D^{(\nu)}$}\label{ssec:explicitdifferentialoperatorsconjugation}

In order to prove the symmetry of $E^{(\nu)}$ and $D^{(\nu)}$ as
stated in Theorem \ref{thm:thmdifferentialoperators} we
use Lemma \ref{lem:symmetricD} with $L=L^{(\nu)}$ as in 
Theorem \ref{thm:LDUweight}, so that $V$ of Lemma \ref{lem:symmetricD}
identifies with the diagonal weight $T^{(\nu)}$. 
\begin{lem}\label{lem:conjugatedDnuandEnu}
The conjugated differential operators $(D^{(\nu)} - 2\ell E^{(\nu)})\widetilde{\ }$ and $( E^{(\nu)} )\widetilde{\ }$ are given explicitly by
\begin{equation*}
(D^{(\nu)} - 2\ell E^{(\nu)})\widetilde{\ } = \left(\frac{d^2}{dx^2}\right) (1-x^2) + \left(\frac{d}{dx}\right) K_1(x) +K_0, \quad ( E^{(\nu)} )\widetilde{\ } = \frac{d}{dx}  S_1^{(\nu)}(x) + S_0^{(\nu)},
\end{equation*}
where
\begin{align*}
K_1(x)&=-x\sum_{i=0}^{2\ell} 2i\, E_{i,i} + (2\nu+1)I, \qquad K_0 =-\sum_{i=0}^{2\ell}i(2\nu+i)E_{i,i} +2\ell(\nu+2\ell+1)I,\\
S_1^{(\nu)}(x) &= -\frac{1}{2}(1-x^2) 
\sum_{i=1}^{2\ell} \frac{i(i+2\nu-2)(2\nu+i+2\ell-1)}{\ell(2\nu+2i-1)(2\nu+2i-3)} E_{i,i-1} 
- \sum_{i=0}^{2\ell-1} \frac{(i-2\ell)}{2\ell} E_{i,i+1},\\
S_0^{(\nu)}(x)&=x\sum_{i=0}^{2\ell} \frac{i(2\nu+i-2)(2\nu+i+2\ell-1)}{2\ell(2\nu+2i-3)} E_{i,i-1} \nonumber \\
&\qquad\qquad\qquad\qquad  +\sum_{i=0}^{2\ell} \frac{i(2\nu+i-1)-4\ell(\ell+1)-2\ell (\nu-1)}{2\ell} E_{i,i}
\end{align*}
Moreover, $(D^{(\nu)} - 2\ell E^{(\nu)})\widetilde{\ }$ and $( E^{(\nu)} )\widetilde{\ }$ are symmetric with respect to the weight $T^{(\nu)}$.
\end{lem}

%\begin{lem}\label{lem:conjugatedDnuandEnu}
%With the choices as above, the conjugated operators
%\begin{gather*}
%(D^{(\nu)} - 2\ell E^{(\nu)})\widetilde{\ } = ??diagonal?? \\
%( E^{(\nu)} )\widetilde{\ } = \frac{d}{dx}  S_1^{(\nu)}(x) + S_0^{(\nu)}\\
%S_1^{(\nu)}(x) = \frac{-1}{2}(1-x^2) 
%\sum_{i=1}^{2\ell} \frac{i(i+2\nu-2)(2\nu+i+2\ell-1)}{\ell(2\nu+2i-1)(2\nu+2i-3)} E_{i,i-1} 
%+ \sum_{i=0}^{2\ell-1} \frac{(i-2\ell)}{2\ell} E_{i,i+1} \\
%\end{gather*}
%are symmetric with the diagonal weight $T^{(\nu)}$. 
%\end{lem}

Lemma \ref{lem:conjugatedDnuandEnu} is proved in Appendix 
\ref{app:Bproofoflem:conjugatedDnuandEnu}. Lemma \ref{lem:conjugatedDnuandEnu} and Proposition
\ref{prop:symetryunderconjugation}
imply the validity of Theorem \ref{thm:thmdifferentialoperators} except
for the commutativity of the differential operators. 

Using \eqref{eq:relationsFtilde} we 
calculate $(D^{(\nu)} - 2\ell E^{(\nu)})\widetilde{\ }$ and 
$( E^{(\nu)} )\widetilde{\ }$, and since this is an explicit 
calculation involving Gegenbauer polynomials, we do so in 
Appendix \ref{app:Bproofoflem:conjugatedDnuandEnu}.
Then the proof of the symmetry can be reduced using
Lemma \ref{lem:symmetricD}, see Appendix \ref{app:Bproofoflem:conjugatedDnuandEnu}. 
The commutativity is proved in Section \ref{ssec:eigenfunctionsMVOP}.

%%%%%SECTION%%%%%%%%%%%%%%%%%%%%%%%%%%%%%%%%%%%%%%%%%%%%%%%%%%%%%%%
%%%%%%%%%%%%%%%%%%%%%%%%%%%%%%%%%%%%%%%%%%%%%%%%%%%%%%%%%%%%%%%%%%%
\subsection{Pearson equation for the weight $W^{(\nu)}$}\label{ssec:Pearsoneq}

In order to prove the Pearson equations of Theorem 
\ref{thm:Pearson} we use the fact that $\mathscr{D}^{(\nu)}$ is symmetric with respect 
to the weight $W^{(\nu)}$ and we use the LDU-decomposition 
of Theorem \ref{thm:LDUweight}. 
We actually do not need the explicit expression for $\Phi^{(\nu)}$ to 
prove Theorem \ref{thm:Pearson}, but we give it for completeness in 
\eqref{eq:PhuPsiexplicit} since it gives a highly non-trivial example of the 
theory for Christoffel transformations for matrix-valued weights as presented in \cite{AlvaAGAMM}. 

\begin{proof}[Proof of Theorem \ref{thm:Pearson}.] 
In case $F_0=0$ in Lemma \ref{lem:symmetricD}, we can integrate the last symmetry 
condition using the last boundary condition to obtain 
$\frac{d(F_2W)}{dx}(x) = F_1(x)W(x)$. 
Note that 
$\mathscr{D}^{(\nu)}$ in \eqref{eq:relDPhiPsi-DnuEnu} has no constant term. 
$\mathscr{D}^{(\nu)}$ is expressed in terms of 
the symmetric differential operators $E^{(\nu)}$ and $D^{(\nu)}$, see 
Theorem \ref{thm:thmdifferentialoperators}, hence symmetric with respect to 
$W^{(\nu)}$. Noting that $F_2=(\Phi^{(\nu)})^\ast$, $F_1=(\Psi^{(\nu)})^\ast$, 
we find $\frac{d((\Phi^{(\nu)})^\ast W^{(\nu)})}{dx}(x) = (\Psi^{(\nu)})^\ast(x)W^{(\nu)}(x)$.
Taking adjoints proves the first identity of Theorem \ref{thm:Pearson}.

The proof of the second identity is more involved. We will show that 
\begin{equation}\label{eq:firstidentityforWPearson}
\frac{dW^{(\nu+1)}}{dx}(x) = c^{(\nu)} W^{(\nu)}(x)\Psi^{(\nu)}(x), 
\end{equation}
implying $\frac{dW^{(\nu+1)}}{dx}(x) =c^{(\nu)} \frac{d(W^{(\nu)}\Phi^{(\nu)})}{dx}(x)$. 
Integrating gives the second identity, since 
$W^{(\nu+1)}$ vanishes at $x=-1$ by Definition \ref{def:weightW} as does $W^{(\nu)}\Phi^{(\nu)}=(\Phi^{(\nu)})^\ast W^{(\nu)}$
by the first boundary condition of Lemma \ref{lem:symmetricD} applied to $\mathcal{D}^{(\nu)}$. 
\end{proof}

In order to prove \eqref{eq:firstidentityforWPearson} we use the LDU-decompositions 
of the weights involved. 
For this we need the relation between $L^{(\nu)}$ and $L^{(\nu+1)}$.

\begin{prop}\label{prop:relationLnuandLnu+1}
The matrix $L^{(\nu)}$ satisfies the following identities
\begin{gather*}
(L^{(\nu)}(x))^{-1} \,  L^{(\nu+1)}(x)= M_1^{(\nu)}(x),  \qquad 
\frac{d((L^{(\nu)})^{-1})}{dx}(x)\, L^{(\nu+1)}(x) = M_2^{(\nu)}(x),\\
M_1(x)=\Id +(1-x^2)\sum_{i=2}^{2\ell} \frac{i(i-1)}{(2\nu+2i-1)(2\nu+2i-3)}\, E_{i,i-2}, \\
M_2(x)=-\sum_{i=1}^{2\ell} i\, E_{i,i-1}-x\sum_{i=2}^{2\ell} \frac{i(i-1)}{(2\nu+2i-3)}\, E_{i,i-2}.
\end{gather*}
\end{prop}

We prove Proposition \ref{prop:relationLnuandLnu+1} in Appendix 
\ref{app:C-proofMVPearson}. 
Note that Proposition \ref{prop:relationLnuandLnu+1} implies 
% \begin{equation}
% \frac{dL^{(\nu)}}{dx}(x) = -L^{(\nu)}(x) \, M_2^{(\nu)}(x)(M_1^{(\nu)}(x))^{-1},\\
% \end{equation}
% using $\frac{dA^{-1}}{dx} = - A^{-1} \frac{dA}{dx}A^{-1}$ and $M^{(\nu)}_1$ being invertible
% for all $x$, as well as 
\begin{equation}\label{eq:fromProp:relationLnuandLnu+1}
(L^{(\nu)}(x))^{-1}\, \frac{dL^{(\nu+1)}}{dx}(x) = \frac{dM^{(\nu)}_1}{dx}(x)-M_2^{(\nu)}(x)
\end{equation}
by differentiation of the first identity of Proposition \ref{prop:relationLnuandLnu+1} and using the second identity.

Using the LDU-decomposition of Theorem \ref{thm:LDUweight} for $\nu$ and $\nu+1$, 
and Proposition \ref{prop:relationLnuandLnu+1} and \eqref{eq:fromProp:relationLnuandLnu+1} 
the identity \eqref{eq:firstidentityforWPearson} is equivalent to showing 
\begin{equation}\label{eq:firstidentityforWPearson1}
A^{(\nu)}(L^{(\nu)})^t =c^{(\nu)} (L^{(\nu)})^t \, \Psi^{(\nu)},
\end{equation}
where
\begin{multline*}
A^{(\nu)}(x) = (T^{(\nu)}(x))^{-1} \left[ \left(\frac{dM_1^{(\nu)}}{dx}(x)-M_2^{(\nu)}(x))\right)T^{(\nu+1)}(x)(M_1^{(\nu)}(x))^t + \right. \\ 
\left.
M_1^{(\nu)}(x)\frac{dT^{(\nu+1)}}{dx}(x) (M_1^{(\nu)}(x))^t + M_1^{(\nu)}(x)T^{(\nu+1)}(x)\left(\frac{dM_1^{(\nu)}}{dx}(x)-M_2^{(\nu)}(x)\right)^t\,\right]
\end{multline*}
The expression for $A^{(\nu)}$ might look complicated, but since $T^{(\nu)}$ is diagonal,
and $M_1^{(\nu)}$, $M_2^{(\nu)}$ only have two non-zero diagonals, $A^{(\nu)}$ is band-limited.
Using the expressions from Theorem \ref{thm:LDUweight} and Proposition 
\ref{prop:relationLnuandLnu+1} we find that $A^{(\nu)}$ is a three-diagonal 
matrix-valued polynomial of degree $2$; 
\begin{multline}\label{eq:Aexplicit}
2\nu(\nu+2\ell)(\nu+\ell) A^{(\nu)}(x)= 
\sum_{k=1}^{2\ell} (2\ell-i+1)(2\nu+1)(\nu+2\ell+1)(\nu+i-1)E_{i,i-1} \\
-x(2\nu+1)(\nu+2\ell+1) \sum_{k=0}^{2\ell} (2\nu^2+2\nu \ell+2\nu i+i(i-1)) E_{i,i} \\
+ (1-x^2) (2\nu+1)(2\ell+\nu+1) \sum_{k=0}^{2\ell-1}
\frac{(i+\nu)(i+1)(2\nu+i-1)(i+2\nu+2\ell)}{(2\nu+2i-1)(2\nu+2i+1)}E_{i,i+1}.
\end{multline}

In order to prove \eqref{eq:firstidentityforWPearson1} we need the explicit 
expression for $\Psi^{(\nu)}$, and for completeness we also write down 
the explicit expression for $\Phi^{(\nu)}$. 
The matrix-valued polynomials $\Phi^{(\nu)}$ and $\Psi^{(\nu)}$ are introduced in 
\eqref{eq:relDPhiPsi-DnuEnu}. A straightforward calculation using 
Theorem \ref{thm:thmdifferentialoperators} gives 
\begin{equation}\label{eq:PhuPsiexplicit}
\begin{split}
&\Phi^{(\nu)}(x)= \Phi(x) + \frac{(\ell+\nu)^2}{\ell^2}(1-x^2)\Id, \\
&\Phi(x) = x^{2}\sum_{i=0}^{2\ell}
\frac{\left( \ell-i\right)^{2}}{\ell^2} E_{i,i}+ 
 x\sum_{i=1}^{2\ell}\frac{(i-1-2\ell)(2\ell-2i+1)}{2 \ell^2} E_{i,i-1}+ \\ 
 &\qquad x\sum_{i=0}^{2\ell-1}\frac{(i+1)(2\ell-2i-1)}{2 \ell^2} E_{i,i+1} + 
\sum_{i=2}^{2\ell}\frac{(2\ell-i+2)(2\ell-i+1)}{4\ell^2} E_{i,i-2}+\\ 
&\qquad \sum_{i=0}^{2\ell}\frac{- i (2\ell-i+1)-(2\ell-i)(i+1)}{4 \ell^2} E_{i,i}  +
 \sum_{i=0}^{2\ell-2}\frac{(i+2)(i+1)}{4 \ell^2} E_{i,i+2}, \\
\end{split}
\end{equation}
and 
\begin{equation}\label{eq:PhuPsiexplicit2}
\begin{split}
& \qquad \frac{- \ell^2}{2\ell+2\nu+1}\Psi^{(\nu)}(x)=
x\sum_{i=0}^{2\ell} ( \nu+i) (\nu+2\ell-i) E_{i,i}+ \\
&\frac12 \sum_{i=1}^{2\ell} (i-1-2\ell) (\nu+i-1) E_{i,i-1} 
+ \frac12 \sum_{i=0}^{2\ell-1} (i+1) (\nu+2\ell-i-1) E_{i,i+1}. \\
\end{split}
\end{equation}

We prove \eqref{eq:firstidentityforWPearson1} in Appendix \ref{app:C-proofMVPearson} 
by showing that the matrix entries 
on both sides, each consisting of at most three Gegenbauer polynomials, are equal using
standard identities for Gegenbauer polynomials. Note that a straightforward check of 
\eqref{eq:firstidentityforWPearson} using Definition \ref{def:weightW} 
and Gegenbauer polynomials is much more involved, but this can also be done.

%%%%%%%%%%%%%%%%%%%%%%%%%%%%%%%%%%%%%%%%%%%%%%%%%%%%%%%%%%%%%%%%%%%
%%%%%SECTION%%%%%%%%%%%%%%%%%%%%%%%%%%%%%%%%%%%%%%%%%%%%%%%%%%%%%%%
%%%%%%%%%%%%%%%%%%%%%%%%%%%%%%%%%%%%%%%%%%%%%%%%%%%%%%%%%%%%%%%%%%%
\section{Matrix-valued orthogonal polynomials}\label{sec:MVOPs}

We prove all the statements of Section \ref{sec:MVGegenbauerpolsandprops} 
concerning the corresponding matrix-valued orthogonal polynomials. 
We also prove the remaining statements of Section  \ref{sec:differentialoperators}, i.e.
Proposition \ref{prop:commutantW} and the commutativity statement of 
Theorem \ref{thm:thmdifferentialoperators}. 

%%%%%SECTION%%%%%%%%%%%%%%%%%%%%%%%%%%%%%%%%%%%%%%%%%%%%%%%%%%%%%%%
%%%%%%%%%%%%%%%%%%%%%%%%%%%%%%%%%%%%%%%%%%%%%%%%%%%%%%%%%%%%%%%%%%%
\subsection{Rodrigues formula and the squared norm}\label{ssec:RodriguesDarboux}

\begin{proof}[Proof of Theorem \ref{thm:squarenormRodrigues}.] 
The leading coefficient $n\text{Id}$ of $\frac{dP^{(\nu)}_n}{dx}$ is non-singular, and for $k\in\N$ with $k<n-1$ we get
\begin{gather*}
\int_{-1}^1 \frac{dP^{(\nu)}_n}{dx}(x) W^{(\nu+1)}(x) x^k \, dx = \\
- c^{(\nu)} \int_{-1}^1 P^{(\nu)}_n(x) \Bigl( \frac{d(W^{(\nu)}\Phi^{(\nu)})}{dx}(x) x^k  + kW^{(\nu)}(x)\Phi^{(\nu)}(x) x^{k-1}\, \Bigr)dx 
\end{gather*}
using integration by parts and Theorem \ref{thm:Pearson}.
Consider the degrees using Theorem \ref{thm:Pearson} again, we see that the integral is zero for all $k<n-1$.
Hence $\frac{dP^{(\nu)}_n}{dx}(x) = n P_{n-1}^{(\nu+1)}(x)$, and (ii) follows.

By Corollary \ref{cor:adjointddx} and the factorisation $P\mathscr{D}^{(\nu)} = \left(\frac{dP}{dx}\right) S^{(\nu)}$ 
we see that $S^{(\nu)}$ maps $P_{n-1}^{(\nu+1)}$ to a multiple of $P_{n}^{(\nu)}$, and this multiple can be obtained
from considering leading coefficients, or by considering the eigenvalues of $\mathscr{D}^{(\nu)}$ obtained from 
\eqref{eq:relDPhiPsi-DnuEnu} and Theorem \ref{thm:thmdifferentialoperatorsP_d}. This gives 
\begin{equation*}
(P^{(\nu+1)}_{n-1}S^{(\nu)})(x) = K_n^{(\nu)} P^{(\nu)}_n(x), \quad
\frac{\ell^2}{2\ell+2\nu+n}\bigl(K_n^{(\nu)}\bigr)_{k,l} = - \de_{k,l} 
(\nu+k)(2\ell+\nu-k)
\end{equation*}
In particular, $S^{(\nu)}$ is a shift or raising operator. 

Now use Corollary \ref{cor:adjointddx} to see
\begin{gather*}
n \langle P^{(\nu+1)}_{n-1},P^{(\nu+1)}_{n-1}\rangle^{(\nu+1)} = 
\langle \frac{dP^{(\nu)}_{n}}{dx},P^{(\nu+1)}_{n-1} \rangle^{(\nu+1)} = 
- c^{(\nu)} \langle P^{(\nu)}_{n},P^{(\nu+1)}_{n-1}S^{(\nu)} \rangle^{(\nu)} = \\
- \langle P^{(\nu)}_{n}, P^{(\nu)}_{n}\rangle^{(\nu)}  
c^{(\nu)} \bigl(K_n^{(\nu)}\bigr)^\ast \qquad \Longrightarrow \\
H_n^{(\nu)} = \frac{-n}{c^{(\nu)}} H_{n-1}^{(\nu+1)} 
\bigl( (K_n^{(\nu)})^\ast\bigr)^{-1} =
(-1)^n n! H_0^{(\nu+n)} \Bigl(\prod_{i=0}^{n-1} c^{(\nu+i)} K_{n-i}^{(\nu+i)}\Bigr)^{-1}
\end{gather*}
since $K_n^{(\nu)}$ is a real diagonal matrix.  Note that 
the matrices commute, so we do not need to specify the order. 
It suffices to calculate $H_0^{(\nu)}$ for general $\nu>0$, and this follows 
immediately from the explicit expression of Definition \ref{def:weightW}
and the orthogonality relations of the (scalar) Gegenbauer polynomials
\eqref{eq:orthoscalarGegenbauer}. 
This leads to (i).

For (iii),  use Theorem \ref{thm:Pearson} to write  
$\bigl( QS^{(\nu)}\bigr)(x)= \bigl(c^{(\nu)}\bigr)^{-1}
\frac{d}{dx}\bigl( Q(x) W^{(\nu+1)}(x)\bigr) W^{(\nu)}(x)^{-1}$, which 
is a raising operator preserving polynomials. Iterating shows
\begin{equation*}
\left(\prod_{i=0}^{n-1} c^{(\nu+i)}\right)\ \Bigl( \bigl( (QS^{(\nu+n-1)}) \cdots S^{(\nu+1)}\bigr)S^{(\nu)}\Bigr)(x)  = 
  \frac{d^n}{dx^n}\Bigl( Q(x) W^{(\nu+n)}(x)\Bigr) \bigl(W^{(\nu)}(x)\bigr)^{-1}.
\end{equation*}
Now take $Q(x)= P^{(\nu+n)}_0(x) = \Id$, 
so that the left hand side equals $\prod_{i=0}^{n-1} 
c^{(\nu+i)}K^{(\nu+i)}_{n-i}\, P^{(\nu)}_n(x)$. 
A calculation gives the diagonal matrix $G_n^{(\nu)}$, and we obtain 
the Rodrigues formula (iii). 
\end{proof}

%%%%%SECTION%%%%%%%%%%%%%%%%%%%%%%%%%%%%%%%%%%%%%%%%%%%%%%%%%%%%%%%
%%%%%%%%%%%%%%%%%%%%%%%%%%%%%%%%%%%%%%%%%%%%%%%%%%%%%%%%%%%%%%%%%%%
\subsection{Eigenfunctions of differential operators
and scalar orthogonal polynomials}\label{ssec:eigenfunctionsMVOP}

\begin{proof}[Proof of Theorem \ref{thm:thmdifferentialoperatorsP_d}.]
Since the differential operators of Theorem \ref{thm:thmdifferentialoperators}
are symmetric with respect to the weight $W^{(\nu)}$ on $(-1,1)$ 
by Theorem \ref{thm:thmdifferentialoperators}, 
and since the differential operators also 
preserve the polynomials of fixed degree, we find that such a differential 
operator acting on $P^{(\nu)}_n$ is an orthogonal polynomial of degree $n$
with respect to the weight $W^{(\nu)}$ on $(-1,1)$. 
Hence it can be written as $\La_n P^{(\nu)}_n$, and  
the eigenvalue matrix $\La_n$ follows by considering leading 
coefficients. 
\end{proof}

\begin{proof}[Final part of the proof of Theorem \ref{thm:thmdifferentialoperators}.]
Theorem \ref{thm:thmdifferentialoperators} is proved in Section \ref{ssec:explicitdifferentialoperatorsconjugation}, except for the 
commutativity of $E^{(\nu)}$ and $D^{(\nu)}$. 
By Theorem \ref{thm:thmdifferentialoperatorsP_d} we see that 
$E^{(\nu)}$ and $D^{(\nu)}$ acting on the matrix-valued Gegenbauer polynomials
commute, since the diagonal eigenvalue matrices $\La_n(E^{(\nu)})$ and $\La_n(D^{(\nu)})$ 
commute. By \cite[Proposition 2.8]{GrunT} it follows that $E^{(\nu)}$ and $D^{(\nu)}$ commute. 
\end{proof}

We can now use the differential operators, especially the diagonal (or uncoupled) 
differential operator of Lemma \ref{lem:conjugatedDnuandEnu}, to obtain precise information on the 
matrix-valued orthogonal polynomials. In particular, we can derive
Theorem \ref{thm:calPnasRacahGegenbauer} in this way.
Indeed,  $P^{(\nu)}_n$ are eigenfunctions of 
$D^{(\nu)}-2\ell E^{(\nu)}$. Hence,  
Lemma \ref{lem:conjugatedDnuandEnu} shows that 
$\cR^{(\nu)}_n(x)\, = \, P^{(\nu)}_n(x)L^{(\nu)}(x)$ are eigenfunctions 
to a diagonal differential operator. 
Note that in general $\cR^{(\nu)}_n$ 
is a matrix-valued polynomial of degree $n+2\ell$ with 
highly singular leading coefficient. Because of Lemma \ref{lem:conjugatedDnuandEnu} the
matrix entry $\bigl(\cR^{(\nu)}_n(x)\bigr)_{k,j}$  is a polynomial solution to the hypergeometric differential operator 
\[
(1-x^2)f''(x) -x(2\nu+2j+1)f'(x) +(n+k-j)(j+n+k+2\nu)f(u)=0.
\]
Since the polynomial solutions are unique up to a constant we find 
\begin{equation}\label{eq:calRasGegenbauer}
\bigl(\cR^{(\nu)}_n(x)\bigr)_{k,j}\, =\, c^{(\nu)}_{k,j}(n)\, C^{(\nu+j)}_{n+k-j}(x).
\end{equation}
Similarly, by Theorem \ref{thm:thmdifferentialoperatorsP_d} and Lemma \ref{lem:conjugatedDnuandEnu}, we have that 
$\cR^{(\nu)}_n (E^{(\nu)})\widetilde{\ } =  \La_n(E^{(\nu)})\cR^{(\nu)}_n$. Note that the eigenvalue does not change.
Using the explicit expression of Lemma \ref{lem:conjugatedDnuandEnu} and that $\La_n(E^{(\nu)})$ is diagonal, plugging $x=1$ gives the recurrence 
\begin{multline}
\label{eq:recurrence_ck}
(2\nu+2i-2)(2\ell-i+1)c^{(\nu)}_{k,i-1}(n) + (i(i+2\nu-1)-4\ell(\ell+1)-2\ell (\nu-1))c^{(\nu)}_{k,i}(n)\\
+ \frac{(i+1)(2\nu+i-1)(2\nu+2\ell+i)(n+k-i)(n+k+i+2\nu)}{(2\nu+2i-1)(2\nu+2i)(2\nu+2i+1)} c^{(\nu)}_{k,i+1}(n)\\
  =\, (-2n(\ell-k)+(2\ell+2)(k-2\ell)-2(\nu-1)(\ell-k))c^{(\nu)}_{k,i}(n), 
\end{multline}
which can be solved in terms of Racah polynomials as in \cite{KoelvPR2}, see also \cite{PachTZ}; 
\begin{equation}\label{eq:coeff_c_as_Racah}
\begin{split}
c^{(\nu)}_{k,i} &= c^{(\nu)}_{k,0}\, (-1)^i\frac{(-2\ell)_i\, (-n-k)_i\, (n+k-i)! }{i!\, (2\nu+2\ell)_i \, (2\nu+2i)_{n+k-i}}\, 
R_k(\la(i);-2\ell-1,-n-k-\nu,\nu-1,\nu-1) \\ &= 
c^{(\nu)}_{k,0}\, (-1)^i\frac{(-2\ell)_i\, (-n-k)_i \, (n+k-i)! }{i!\, (2\nu+2\ell)_i \, (2\nu+2i)_{n+k-i}}\, 
\rFs{4}{3}{-i,i+2\nu-1,-k, -n-\nu-2\ell}{\nu,-n-k,-2\ell}{1}.
\end{split}
\end{equation} 

Since the inverse of the lower triangular matrix $L^{(\nu)}$ has been calculated explicitly by 
Cagliero and Koornwinder \cite[Thm. 4.1]{CaglK} as 
\begin{equation}\label{eq:inverseLbyCK}
\Bigl( L^{(\nu)}(x)^{-1}\Bigr)_{i,j} = \frac{i!}{j! \, (2\nu+i+j-1)_{i-j}}\, C^{(1-\nu-i)}_{i-j}(x), \qquad i\geq j,
\end{equation}
the proof of Theorem \ref{thm:calPnasRacahGegenbauer} follows directly from
\eqref{eq:calRasGegenbauer} and \eqref{eq:coeff_c_as_Racah} up to an explicit expression of 
$c^{(\nu)}_{k,0}$.

It remains to compute $c^{(\nu)}_{k,0}$. If we combine $\mathcal{R}^{(\nu)}_n=P_n^{(\nu)}L^{(\nu)}$ with Theorem \ref{thm:squarenormRodrigues}(ii), we obtain
\begin{equation}
\label{eq:Rn_M1M2}
n\mathcal{R}^{(\nu+1)}_{n-1}(x)= \left(\frac{d \mathcal{R}^{(\nu)}_n(x)}{dx}\right) \, M_1^{(\nu)}(x)+ \mathcal{R}^{(\nu)}_n(x)\, M_2^{(\nu)}(x).
\end{equation}
It follows from \eqref{eq:calRasGegenbauer} and Proposition \ref{prop:relationLnuandLnu+1} that the $(k,0)$-th entry of 
\eqref{eq:Rn_M1M2}, evaluated at $x=1$, is given by
\begin{equation}
\label{eq:recurrence_c_M1M2}
n c_{k,0}^{(\nu+1)}(n-1) = 2\nu \, c_{k,0}^{(\nu)}(n) - c_{k,1}^{(\nu)}(n) -\frac{2(n+k-1)(n+k+2\nu+1)}{(2\nu+1)(2\nu+2)(2\nu+3)} \, c_{k,2}^{(\nu)}.
\end{equation}
If we write $c_{k,2}^{(\nu)}(n)$ in terms of $c_{k,0}^{(\nu)}(n)$ and $c_{k,1}^{(\nu)}(n)$ using the recurrence \eqref{eq:coeff_c_as_Racah} for $i=1$, and we replace in \eqref{eq:recurrence_c_M1M2}, we obtain
\begin{equation}
\label{eq:recurrence_c_M1M2_2}
n c_{k,0}^{(\nu+1)}(n-1) = \left(2\nu +\frac{2\ell}{(2\nu+2\ell+1)}\right) \, c_{k,0}^{(\nu)}(n) - \left(1-\frac{(2\nu-k(2\nu+2n+2\ell)+2\ell n)}{2\nu(2\nu+2\ell+1)}\right) c_{k,1}^{(\nu)}(n) .
\end{equation}
Finally, from the recurrence \eqref{eq:coeff_c_as_Racah} for $i=0$ we write $c_{k,1}^{(\nu)}(n)$ in terms of $c_{k,0}^{(\nu)}(n)$. If we replace this in \eqref{eq:recurrence_c_M1M2_2} we get
\begin{equation}
\label{eq:recurrence_c_two_terms}
c_{k,0}^{(\nu+1)}(n-1) = - \frac{2(2\nu+1)(\nu+k)(k-\nu-2\ell)(2\nu+n+2\ell)}{(2\nu+2\ell+1)(\nu+\ell)(n+k)(n+k+2\nu)} \, c_{k,0}^{(\nu)}(n).
\end{equation}
Moreover, since $\mathcal{R}_0^{(\nu)}=L^{(\nu)}$, we have $c_{k,0}^{(\nu)}(0)=(2\nu)_k$ and therefore the recurrence \eqref{eq:recurrence_c_two_terms} is solved explicitly by
\[
c_{k,0}^{(\nu)}(n)=\frac{2^{-n}(\nu)_n (2\ell+2\nu)_n (n+k)!}{(\nu+k)_n (2\ell+\nu-k)_n(2\nu)_{n+k}}.
\]
Observe that for the case $\nu=1$, the initial value $c_{k,0}^{(\nu)}$ differs from that in \cite{KoelvPR2} by a factor
$(-1)^n 2^{-n} (n+k)!/(2\nu)_{n+k}$. This is due to a different normalisation in 
\eqref{eq:calRasGegenbauer} and the fact the orthogonality interval is $[-1,1]$ 
and $[0,1]$ in \cite[\S 6]{KoelvPR2}.

%%%%%SECTION%%%%%%%%%%%%%%%%%%%%%%%%%%%%%%%%%%%%%%%%%%%%%%%%%%%%%%%
%%%%%%%%%%%%%%%%%%%%%%%%%%%%%%%%%%%%%%%%%%%%%%%%%%%%%%%%%%%%%%%%%%%
\subsection{Three-term recurrence relation}\label{ssec:3termrecurrenceMVOP}

To calculate the coefficients $B^{(\nu)}_n$, $C^{(\nu)}_n$ in Theorem 
\ref{thm:three_term_for_Pn} we first note that $C^{(\nu)}_n$ 
follows from the squared norm; 
\begin{equation*}
C_n^{(\nu)} =  H^{(\nu)}_n \bigl( H^{(\nu)}_{n-1}\bigr)^{-1}
\end{equation*}
and the explicit expression of Theorem \ref{thm:squarenormRodrigues}. 
 
Writing $P_n^{(\nu)}(x) = x^n\Id + x^{n-1} X^{(\nu)}_n + \cdots$, we find
\begin{equation*}
B_n^{(\nu)} =  X^{(\nu)}_n - X^{(\nu)}_{n+1}
\end{equation*}
by comparing coefficients of $x^{n}$ in the three-term recurrence relation.
Now Theorem \ref{thm:three_term_for_Pn} follows from these 
observations and Lemma \ref{lem:onebutleadingcoefficient}.

\begin{lem}\label{lem:onebutleadingcoefficient}
The one-but-leading coefficient $X^{(\nu)}_n$ is given by
\begin{gather*}
\frac{n}{2} \sum_{i=0}^{2\ell-1} \frac{i-2\ell}{2\ell+\nu+n-1-i} E_{i,i+1}
- \frac{n}{2} \sum_{i=1}^{2\ell} \frac{i}{\nu+n-1+i} E_{i,i-1}
\end{gather*}
\end{lem}

\begin{proof} Differentiate $P_n^{(\nu)}$ 
and use Theorem \ref{thm:squarenormRodrigues}(ii) to find 
$(n-1) X^{(\nu)}_n =  n X^{(\nu+1)}_{n-1}$. 
Iterating gives $X^{(\nu)}_n =  n X^{(\nu+n-1)}_{1}$, so that it suffices the 
check the polynomial of degree $1$. Using the Rodrigues formula of 
Theorem \ref{thm:squarenormRodrigues}(iii) and the adjoint of the relations 
in Theorem \ref{thm:Pearson}, we find $P_1^{(\nu)}(x)= c^{(\nu)} G_1^{(\nu)} 
\bigl( \Psi^{(\nu)}(x)\bigr)^\ast$. Using \eqref{eq:PhuPsiexplicit2} we find  
$X^{(\nu)}_1$,
and hence $X^{(\nu)}_n$. 
\end{proof}

%%%%%SECTION%%%%%%%%%%%%%%%%%%%%%%%%%%%%%%%%%%%%%%%%%%%%%%%%%%%%%%%
%%%%%%%%%%%%%%%%%%%%%%%%%%%%%%%%%%%%%%%%%%%%%%%%%%%%%%%%%%%%%%%%%%%
\subsection{Commutant}\label{ssec:commutant}
The commutant algebra $A^{(\nu)}$ in Proposition \ref{prop:commutantW} is a 
$\ast$-algebra, hence generated by its self-adjoint elements. Let
$T=T^\ast \in A^{(\nu)}$ be an invertible self-adjoint element, 
then $P\mapsto PT$ is a symmetric operator with 
respect to $W^{(\nu)}$, a matrix-valued differential operator of 
order $0$.  Since it preserves the polynomials, the 
matrix-valued Gegenbauer polynomials are eigenfunctions.
Hence, $P_n^{(\nu)}(x) T = T P_n^{(\nu)}(x)$ for all $n$ and all $x$ by comparing leading coefficients,
cf. \cite[Lem.~3.1(2)]{KoelR}. In particular, $T$ commutes with the one-but-leading coefficient of $P_n^{(\nu)}$, i.e.  $X_n^{(\nu)} T=T  X_n^{(\nu)} $ for all $n\in \N$. From the explicit expression of $X^{(\nu)}_n$ in Lemma \ref{lem:onebutleadingcoefficient}, we find that $T_{i,j}=0$ unless $j=i$ or $j=2\ell-i$, $T_{i,i}=T_{i+1,i+1}$ and $T_{i,2\ell-i}=T_{i+1,2\ell-i-1}$ for all $i=0,\ldots,2\ell-1$. This implies that $T\in \langle\{I,J\}\rangle$.

Observe that $W^{(\nu)}(x)J=JW^{(\nu)}(x)$ for all $x\in(-1,1)$ if and only if $(W^{(\nu)}(x))_{2\ell -n,m} =(W^{(\nu)}(x))_{n,2\ell-m}$ for all
$n,m$ and all $x$. This means that the weight matrix $W^{(\nu)}(x)$ is persymmetric (or orthosymmetric).  
Using Definition \ref{def:weightW} and comparing coefficients of the Gegenbauer polynomials we need to prove 
\begin{equation*}
\al_t^{(\nu)} \bigl(\min(2\ell-m,n),\max(2\ell-m,n)\bigr) = \al_{t+m-n}^{(\nu)}\bigl(\min(2\ell-n,m), \max(2\ell-n,m)\bigr).
\end{equation*}
This is straightforwardly verified from the expression in Definition \ref{def:weightW}.
%%%%%%%%%%%%%%%%%%%%%%%%%%%%%%%%%%%%%%%%%%%%%%%%%%%%%%%%%%%%%%%%%%%
%%%%%APPENDIX%%%%%%%%%%%%%%%%%%%%%%%%%%%%%%%%%%%%%%%%%%%%%%%%%%%%%%
%%%%%%%%%%%%%%%%%%%%%%%%%%%%%%%%%%%%%%%%%%%%%%%%%%%%%%%%%%%%%%%%%%%
\appendix

%%%%%%%%%%%%%%%%%%%%%%%%%%%%%%%%%%%%%%%%%%%%%%%%%%%%%%%%%%%%%%%%%%%
%%%%%SECTION%%%%%%%%%%%%%%%%%%%%%%%%%%%%%%%%%%%%%%%%%%%%%%%%%%%%%%%
%%%%%%%%%%%%%%%%%%%%%%%%%%%%%%%%%%%%%%%%%%%%%%%%%%%%%%%%%%%%%%%%%%%
\section{Proof of the LDU-decomposition}\label{app:proofthmLDUdecompW}

In Appendix \ref{app:proofthmLDUdecompW} we prove Theorem \ref{thm:LDUweight}. 
The proof is an extension of the result \cite[Thm.~2.1, App.~A]{KoelvPR2} for the
case $\nu=1$. Note that the proof is verificational; it has initially been obtained by
use of computer algebra for specific values of $\nu$ and $\ell$. We indicate all the steps
and we leave out the manipulation of shifted factorials. 
We assume $\nu>0$, but it is possible to extend to argument to  
$\nu>-\frac12$, $\nu\not=0$.

By using the symmetry and 
taking matrix elements we see that the LDU-decomposition is equivalent to proving
\begin{equation}\label{eq:equavalenttoThmLDU}
\sum_{t=0}^m %\hskip-.3truecm
\al^{(\nu)}_t(m,n) \, C^{(\nu)}_{m+n-2t}(x) \, = \,
\sum_{k=0}^m \frac{m!\, n!\, t^{(\nu)}_k\, (1-x^2)^k}{k!\, k!\, (2\nu+2k)_{m-k}(2\nu+2k)_{n-k}}
C^{(\nu+k)}_{m-k}(x)\, C^{(\nu+k)}_{n-k} 
\end{equation}
for $n\geq m$. Note first that the right hand side is a polynomial of degree $n+m$.
We need to find its expansion in terms of Gegenbauer polynomials $C^{(\nu)}_r(x)$, so that we 
need the integral
\begin{equation}\label{eq:integralof3Gegenbauers}
\int_{-1}^1 C^{(\nu+k)}_{m-k}(x)\, C^{(\nu+k)}_{n-k}(x) C^{(\nu)}_r(x)\, (1-x^2)^{k+\nu-1/2}\, dx.
\end{equation}
Since the Gegenbauer polynomials are symmetric and by the orthogonality relations for the 
Gegenbauer polynomials $C^{(\nu+k)}_n$ we see that 
this integral is zero for $n+m-r$ odd and for $r<n-m$, so that the expansion of the 
right hand side of \eqref{eq:equavalenttoThmLDU} in terms of Gegenbauer polynomials $C^{(\nu)}_r(x)$
only has non-zero terms for the summands in the left hand side of \eqref{eq:equavalenttoThmLDU}.
Using \eqref{eq:orthoscalarGegenbauer} it
remains to prove the identity 
\begin{equation}\label{eq:equavalenttoThmLDU2}
\begin{split}
&\qquad\qquad\qquad 
\al^{(\nu)}_t(m,n) \frac{(2\nu)_{m+n-2t} \sqrt{\pi}\, \Ga(\nu+\frac12)}{(m+n-2t)! (m+n-2t+\nu) \Ga(\nu)} 
= \\
&\sum_{k=0}^m\frac{m!\, n!\, t^{(\nu)}_k}{k!\, k!\, (2\nu+2k)_{m-k}(2\nu+2k)_{n-k}} 
\int_{-1}^1 C^{(\nu+k)}_{m-k}(x)\, C^{(\nu+k)}_{n-k}(x) C^{(\nu)}_{m+n-2t}(x)\, (1-x^2)^{k+\nu-1/2}\, dx
\end{split}
\end{equation}
for $n\geq m$, $t\in\{0,\cdots,m\}$. 

The integral in \eqref{eq:equavalenttoThmLDU2} has been evaluated in \cite[Rmk.~2.8]{KoelvPR2} and 
in order to prove \eqref{eq:equavalenttoThmLDU2} we follow the steps in \cite[App.~A]{KoelvPR2}. The integral
in \eqref{eq:equavalenttoThmLDU2} is equal to a balanced ${}_4F_3$-series \cite[Rmk.~2.8]{KoelvPR2}, 
\begin{equation*}
\begin{split}
&\frac{(\nu+k)_{m-k}(2\nu+2k)_{n-k}(-m)_{m-t}(\nu)_{n-t}\Ga(\nu+k+1/2)}{(m-k)!\, (n-k)!\, (m-t)!\, \Ga(\nu +m+n-t+1)}\sqrt{\pi} \\
& \quad \times \rFs{4}{3}{k-m,-m-k-2\nu+1,t-m,\nu+1+n-t}{-m,-m-\nu+1,n-m+1}{1}
\end{split}
\end{equation*}
Using Whipple's transformation, see e.g. \cite[Thm.~3.3]{AndrAR}, twice (once with $(n,a,d)$ of
\cite[Thm.~3.3]{AndrAR} as $(m-k,-m-2\nu-k+1,-m)$ and the second time as $(t,-m-n-\nu+t,-m)$) the 
${}_4F_3$-series can be rewritten as 
\begin{equation}\label{eq:appA1}
\frac{(1-n-m-2\nu)_{m-k} (-n)_t (\nu)_t}{(1+n-m)_{m-k} (1-\nu-m)_t(1-m-n-2\nu)_t}
\rFs{4}{3}{-k,k+2\nu-1,-t,t-m-n-\nu}{-m,\ -n, \ \nu }{1}
\end{equation}

We now plug the obtained expression for the integral into the right hand side of \eqref{eq:equavalenttoThmLDU2}, where we replace the 
${}_4F_3$-series of \eqref{eq:appA1} by its sum $\sum_{j=0}^k$. We next interchange the summations over $k$ and $j$, and in the 
summation $\sum_{k=j}^m$ we replace $k=p+j$. This gives for the inner sum (the $k$-dependent part)
\begin{equation*}
\begin{split}
& \frac{(2\nu+2\ell)_j (1-n-m-2\nu)_{m-j}(2\nu+j)_j}{(-2\ell)_j (2\nu+j)_m (m-j)!\, (1+n-m)_{m-j}}  \\
& \qquad \times \rFs{5}{4}{2\nu+2j-1,\nu+j+\frac12, j-n, 2\nu+2l+j,j-m}{\nu+j-\frac12, 2\nu+j+n, -2\ell+j,2\nu+m+j}{1} \\
= & \frac{(1-n-m-2\nu)_m (n-2\ell)_m}{(-2\ell)_m m!\, (1+n-m)_m (2\nu+n)_m} 
\frac{(2\nu+2\ell)_j (-m)_j (-n)_j}{(1-n-m+2\ell)_j}
\end{split}
\end{equation*}
using the Dougall summation formula for a very-well-poised ${}_5F_4$-series, see e.g. \cite[Cor.~4.3]{AndrAR}, \cite[\S 4.3(3)]{Bail}.
This shows that the right hand side of \eqref{eq:equavalenttoThmLDU2} can be written as a single sum; explicitly 
\begin{equation*}
\begin{split}
&\frac{(-m)_{m-t}(\nu)_{n-t} \sqrt{\pi} (-n)_t (\nu)_t n!\, (2\ell+\nu)(\nu)_m \Ga(\nu+\frac12) (1-n-m-2\nu)_m (n-2\ell)_m}
{(n-m)!\, (m-t)!\, \Ga(\nu+n+m-t+1)(1-\nu-m)_t (1-n-m-2\nu)_t (-2\ell)_m} \\
& \qquad\times \frac{1}{ (1+n-m)_m (2\nu+n)_m}\, \rFs{3}{2}{-t, t-m-n-\nu, 2\nu+2\ell}{\nu, 1-n-m+2\ell}{1}
\end{split}
\end{equation*}
The balanced ${}_3F_2$-series is summable 
to $\frac{(1-m-n-2\nu)_t (-2\ell-\nu)_t}{(\nu)_t (2\ell+1-m-n)_t}$ by the Pfaff-Saalsch\"utz formula, see e.g. \cite[Thm.~2.2.6]{AndrAR}, 
\cite[\S 2.2]{Bail}, \cite[(1.4.5)]{Isma}.
Next a straightforward verification using the expression of Definition \ref{def:weightW} shows that this is equal to the 
left hand side of \eqref{eq:equavalenttoThmLDU2}, which proves Theorem \ref{thm:LDUweight}. 

%%%%%%%%%%%%%%%%%%%%%%%%%%%%%%%%%%%%%%%%%%%%%%%%%%%%%%%%%%%%%%%%%%%%%%
%%%%%%%%%%%%%%%%%%%%%%%%%%%%%%%%%%%%%%%%%%%%%%%%%%%%%%%%%%%%%%%%%%%%%%
%%%%%%%%%%%%%%%%%%%%%%%%%%%%%%%%%%%%%%%%%%%%%%%%%%%%%%%%%%%%%%%%%%%%%%

\section{Proof of Lemma \ref{lem:conjugatedDnuandEnu}}
\label{app:Bproofoflem:conjugatedDnuandEnu}
The following properties of the Gegenbauer polynomials
\begin{align}
\label{eq:Gegenbauer_three_term}
xC_i^{(\lambda)}(x)&=\frac{(i+1)}{2(i+\lambda)}C_{i+1}^{(\lambda)}(x)+\frac{(i+2\lambda-1)}{2(i+\lambda)}C_{i-1}^{(\lambda)}(x),\\
\label{eq:Gegenbauer_lambda_p1}
(i+\lambda)C_i^{(\lambda)}(x)&=\lambda(C_i^{(\lambda+1)}(x)-C_{i-2}^{(\lambda+1)}(x)),\\
\label{eq:Gegenbauer_1mx2}
(1-x^2)C_i^{(\lambda+1)}(x)&=-\frac{ (i+1)(i+2)}{4\lambda(i+\lambda+1)} 
C_{i+2}^{(\lambda)}(x)+\frac{ (i+2\lambda)(i+2\lambda+1)}{4\lambda(i+\lambda+1)}C_i^{(\lambda)}(x), 
\end{align}
are useful, see e.g. \cite[\S 4.5]{Isma}. We follow the convention that polynomials of negative degree are $0$. 

First we observe that $D^{(\nu)}-2\ell E^{(\nu)}=L^{(\nu)}(D^{(\nu)}-2\ell E^{(\nu)})\widetilde{\ }(L^{(\nu)})^{-1}$ if and only if
\begin{align}
\label{eq:symmetryDt_1}
2(1-x^2)\frac{dL^{(\nu)}}{dx}(x)+L^{(\nu)}(x)K_1(x)=(C-x(2\ell+2\nu+1)-2\ell(xB_1+B_0))L^{(\nu)}(x),\\
(1-x^2)\frac{d^2 L^{(\nu)}}{dx^2}(x)+\frac{d L^{(\nu)}}{dx}(x)K_1(x)+L^{(\nu)}(x)K_0  = (-V-2\ell A_0)L^{(\nu)}(x).
\label{eq:symmetryDt_2}
\end{align}
If we multiply the $(m,k)$-th entry of \eqref{eq:symmetryDt_2} by $m!/(k!(2\nu+2k)_{m-k})$ we obtain
\begin{multline}
\label{eq:diff_eq_gegen1}
(1-x^2) \frac{d^2 C^{(\nu+k)}_{m-k}}{dx^2}(x)
-x(2\nu+2k+1) \frac{d C^{(\nu+k)}_{m-k}}{dx}(x) \\
+ ( -k(2\nu+k)+2\ell(\nu+2\ell+1))C^{(\nu+k)}_{m-k}(x) \\
= ( m(2\ell-m)-(2\ell+2)(m-2\ell)+2(\nu-1)(\ell-m)) C^{(\nu+k)}_{m-k}(x).
\end{multline}
Since $ -k(2\nu+k)+2\ell(\nu+2\ell+1)-m(2\ell-m)+(2\ell+2)(m-2\ell)-2(\nu-1)(\ell-m) =(m-k)(m+k+2\nu)$, \eqref{eq:diff_eq_gegen1} is the differential equation for the Gegenbauer polynomial $C^{(\nu+k)}_{m-k}$, see \cite[\S 4.5]{Isma}, and hence \eqref{eq:symmetryDt_2} holds true.

On the other hand, if we multiply \eqref{eq:symmetryDt_1} by $m!/(k!(2\nu+2k)_{m-k})$, we obtain
\begin{multline}
2(1-x^2)\frac{dC^{(\nu+k)}_{m-k}}{dx}(x) - x(2\nu+2k+1)C^{(\nu+k)}_{m-k}(x) \\
=- x(2m+2\nu+1)C^{(\nu+k)}_{m-k}(x) + 2(2\nu+m+k-1) C^{(\nu+k)}_{m-1-k}(x).
\end{multline}
which can be verified using \cite[(22.7.21)]{Abr}.

In order to prove the statement for $(E^{(\nu)} )\widetilde{\ }$ we observe that $E^{(\nu)}=L^{(\nu)}(E^{(\nu)})\widetilde{\ }(L^{(\nu)})^{-1}$ if and only 
\begin{align}
\label{eq:conjugation_E1}
L^{(\nu)}S_1^{(\nu)}=(xB_1+B_0)L^{(\nu)},\qquad
(L^{(\nu)})'S_1^{(\nu)}+L^{(\nu)}S_0^{(\nu)}=A_0^{(\nu)}L^{(\nu)}.
\end{align}

If we multiply the first equation in \eqref{eq:conjugation_E1} by  $m!/(k!(2\nu+2k)_{m-k})$, we obtain
\begin{multline}
\label{eq:conjugation_E2}
\frac{k(2\ell-k+1)(2\nu+m+k-1)}{4\ell(2\nu+2k-1)(\nu+k-1)}\, C^{(\nu+k-1)}_{m-k+1}(x) \\
- \frac{(2\ell+2\nu+k)(2\nu+k-1)(\nu+k)}{\ell(2\nu+2k-1)(2\nu+k+m)}\, (1-x^2)\, C^{(\nu+k+1)}_{m-k-1}(x)
= -\frac{(\ell-m)}{\ell} \, x \, C^{(\nu+k)}_{m-k}(x) \\
 - \frac{(2\nu+m+k-1)}{2\ell} \, C^{(\nu+k)}_{m-k-1}(x)
+\frac{(m+1)(2\ell-m)}{2\ell(m+k+2\nu)} \, C^{(\nu+k)}_{m-k+1}(x).
\end{multline}
On the left hand side of \eqref{eq:conjugation_E2} we apply \eqref{eq:Gegenbauer_lambda_p1} on the first term and \eqref{eq:Gegenbauer_1mx2} on the second term and on the right hand side of \eqref{eq:conjugation_E2} we apply the three-term recurrence relation \eqref{eq:Gegenbauer_three_term}. Now \eqref{eq:conjugation_E2} only involves Gegenbauer polynomials  $C^{(\nu+k)}_{m-k-1}$ and $C^{(\nu+k)}_{m-k+1}$ and we verify \eqref{eq:conjugation_E2} by showing that the coefficients for each of these polynomials are equal.

If we multiply the second equation in \eqref{eq:conjugation_E1} by  $m!/(k!(2\nu+2k)_{m-k})$, we obtain
\begin{multline}
\label{eq:conjugation_E2_2}
\frac{k(2\nu+m+k-1)(2\ell-k+1)}{4\ell(\nu+k-1)(2\nu+2k-1)} \frac{dC^{(\nu+k-1)}_{m-k+1}}{dx} \, (x)  \\
-\frac{(\nu+k)(2\nu + k-1)(2\nu+k+2\ell)}{\ell(m+k+2\nu)(2\nu+2k-1)}\, (1-x^2) \, \frac{dC^{(\nu+k+1)}_{m-k-1}}{dx} \, (x) \\
+\frac{(k^2-k+2\nu k-4\ell^2-2\ell-2\nu\ell)}{2\ell} \, C^{(\nu+k)}_{m-k}(x) \\
+\frac{(\nu+k)(2\nu+2k+1)(k-1+2\nu)(2\nu+k+2\ell)}{\ell(m+k+2\nu)(2\nu+2k-1)} \, x \, C^{(\nu+k+1)}_{m-k-1}(x) \\
=-\frac{(2\ell^2-m\ell+\ell+\nu\ell-\nu m)}{\ell} C^{(\nu+k)}_{m-k}(x).
\end{multline}
In order to verify \eqref{eq:conjugation_E2_2} we proceed as before. We use \eqref{eq:Gegenbauer_lambda_p1}, \eqref{eq:Gegenbauer_1mx2} and \eqref{eq:Gegenbauer_three_term} to write \eqref{eq:conjugation_E2_2} as a combination of Gegenbauer polynomials with parameter $\nu+k$ and we verify that the coefficients for the coefficients on either side are equal. 

The $k$-th diagonal entry of $(D^{(\nu)}-2\ell E^{(\nu)})\widetilde{\ }$ is, up to a constant, the differential operator for the Gegenbauer polynomials $C^{(\nu+k)}_n$ and hence it is symmetric with respect to the $k$-th diagonal entry of $T^{(\nu)}$ being the weight for the Gegenbauer polynomials with parameter $\nu+k$. This implies the symmetry of $(D^{(\nu)}-2\ell E^{(\nu)})\widetilde{\ }$.

We prove the symmetry of the operator $(E)\widetilde{\ }$ by showing that the symmetry and boundary conditions in Lemma \ref{lem:symmetricD} hold true, taking $F_2=0$. The boundary conditions follow directly from the explicit expressions of $T^{(\nu)}$ in Theorem \ref{thm:LDUweight} and of $S_1^{(\nu)}$ in Lemma \ref{lem:conjugatedDnuandEnu}. Finally we need to prove
\begin{align}
\label{eq:symmetry_E_1}
-S_1^{(\nu)}T^{(\nu)} = T^{(\nu)}(S_1^{(\nu)})^*,\qquad - (S_1^{(\nu)}T^{(\nu)})'+S_0^{(\nu)}T^{(\nu)} = T^{(\nu)}(S_0^{(\nu)})^*.
\end{align}
Writing down the $(k,k-1)$-th and $(k,k+1)$-th entries of the first equation of \eqref{eq:symmetry_E_1} we see that it is equivalent to $-(T^{(\nu)}(x))_{k-1,k-1}/(T^{(\nu)}(x))_{k,k} = (S^{(\nu)}_1)_{k-1,k}/ (S^{(\nu)}_1)_{k,k+1}$ which can be verified from the explicit expressions of $T^{(\nu)}$ and of $S_1^{(\nu)}$. On the other hand, the only non-zero entries of the second equation in \eqref{eq:symmetry_E_1} are the $(k,k-1)$-th and $(k-1,k)$-th entries. The verification is straightforward from the explicit expressions of $T^{(\nu)}$, $S_1^{(\nu)}$ and $S_0^{(\nu)}$.
\qed

%%%%%%%%%%%%%%%%%%%%%%%%%%%%%%%%%%%%%%%%%%%%%%%%%%%%%%%%%%%%%%%%%%%%%%
%%%%%%%%%%%%%%%%%%%%%%%%%%%%%%%%%%%%%%%%%%%%%%%%%%%%%%%%%%%%%%%%%%%%%%
%%%%%%%%%%%%%%%%%%%%%%%%%%%%%%%%%%%%%%%%%%%%%%%%%%%%%%%%%%%%%%%%%%%%%%

\section{Proof of the matrix-valued Pearson equation}
\label{app:C-proofMVPearson}

\begin{proof}[Proof of Proposition \ref{prop:relationLnuandLnu+1}.]
The first statement of Proposition \ref{prop:relationLnuandLnu+1} is equivalent to 
$L^{(\nu+1)}(x)-L^{(\nu)}(x)M_1^{(\nu)}(x)=0$.
The $(n,m)$-entry, divided by $m!/(n!(2\nu+2n+2)_{m-n})$, of the left hand side is given by
\begin{multline}
\label{eq:nm_identities_L_1v2}
C^{(\nu+n+1)}_{m-n}(x)-\frac{(2\nu+n+m)(2\nu+n+m+1)}{(\nu+n)(2\nu+2n+1)}C^{(\nu+n)}_{m-n}(x)\\
+\frac{2(\nu+n+1)}{(2\nu+2n+1)} (1-x^2)C^{(\nu+n+2)}_{m-n-2}(x).
\end{multline}
Use \eqref{eq:Gegenbauer_lambda_p1} in the second term of \eqref{eq:nm_identities_L_1v2} and
\eqref{eq:Gegenbauer_1mx2} in the third term of \eqref{eq:nm_identities_L_1v2}  to write
in terms of two Gegenbauer polynomials of parameter $\nu+n+1$ and degree $m-n$ and $m-n-2$.
By an easy calculation both coefficients are zero, hence the first statement of Proposition \ref{prop:relationLnuandLnu+1} follows.

Using $\frac{dA^{-1}}{dx} = -A^{-1} \frac{dA}{dx} A^{-1}$ and the first statement of 
Proposition \ref{prop:relationLnuandLnu+1} it suffices to prove 
\begin{equation}\label{eq:identities_L_1v2}
\frac{dL^{(\nu)}}{dx}(x)\, M_1^{(\nu)}(x) + L^{(\nu)}(x)M_2^{(\nu)}(x) = 0.
\end{equation}
The $(n,m)$-entry of the left hand side of \eqref{eq:identities_L_1v2}, divided by $m!/(n!(2\nu+2n)_{m-n})$, is given by
\begin{multline}
\label{eq:nm_identities_L_2v2}
2(\nu+n)C^{(\nu+n+1)}_{m-n-1}(x) + \frac{8(\nu+n)(\nu+n+1)(\nu+n+2)}{(m+n+2\nu)(m+n+2\nu+1)} (1-x^2)C_{m-n-3}^{(\nu+n+3)}(x) \\
-\frac{2(2\nu+2n+1)(\nu+n)}{(m+n+2\nu)}C_{m-n-1}^{(\nu+n+1)}(x) +\frac{4(\nu+n)(\nu+n+1)(2\nu+2n+3)}{(m+n+2\nu)(m+n+2\nu+1)}xC_{m-n-2}^{(\nu+n+2)}(x).
\end{multline}
Applying \eqref{eq:Gegenbauer_lambda_p1} in the first and third term of \eqref{eq:nm_identities_L_2v2}, \eqref{eq:Gegenbauer_1mx2} 
in the second term of \eqref{eq:nm_identities_L_2v2} and \eqref{eq:Gegenbauer_three_term} in the fourth term of \eqref{eq:nm_identities_L_2v2}, we 
expand \eqref{eq:nm_identities_L_2v2} in terms of two Gegenbauer polynomials of parameter $\nu+n+2$ and of degree $m-n-1$ and $m-n-3$.
A straightforward computation shows that the coefficients of $C_{m-n-1}^{(\nu+n+2)}$ and $C_{m-n-3}^{(\nu+n+2)}$ are zero. 
\end{proof}

For the proof of Theorem \ref{thm:Pearson} we finally have to prove \eqref{eq:firstidentityforWPearson1}.
For this we rewrite \eqref{eq:firstidentityforWPearson1} as
\begin{equation}\label{eq:AnuL}
A^{(\nu)}(x)(L^{(\nu)}(x))^t - c^{(\nu)}(L^{(\nu)}(x))^t \, \Psi^{(\nu)}(x) = 0.
\end{equation}
We write the three-diagonal matrices $A^{(\nu)} $ and $\Psi^{(\nu)}$ as
\begin{align*}
A^{(\nu)}(x)&=\sum_k \, A_{k,k-1}E_{k,k-1} + \sum_k \, x\,A_{k,k}E_{k,k} +\sum_k\, (1-x^2)\,A_{k,k+1}\,E_{k,k+1},\\
c^{(\nu)}\Psi^{(\nu)}(x)&=\sum_k \, \psi_{k,k-1}E_{k,k-1} + \sum_k \, x\,\psi_{k,k}E_{k,k} +\sum_k\, \psi_{k,k+1}\,E_{k,k+1},
\end{align*}
where the explicit expression for the matrix entries follow from \eqref{eq:PhuPsiexplicit2}, \eqref{eq:Aexplicit}. 
The $(n,m)$-entry of the left hand side of \eqref{eq:AnuL} is given by
\begin{multline*}
% \label{eq:AnuL2}
\frac{n(m+n+2\nu-1)}{2(\nu+n-1)(2\nu+2n-1)} \, A_{n,n-1} C^{(\nu+n-1)}_{m-n+1}(x) +A_{n,n}\, x \, C^{(\nu+n)}_{m-n}(x) \\
+ \frac{2(\nu+n)(2\nu+2n+1)}{(m+n+2\nu)(n+1)} \, A_{n,n+1}\, (1-x^2)C^{(\nu+n+1)}_{m-n-1}(x) - \frac{(m+n+2\nu-1)}{m} \, \psi_{m-1,m}\, C^{(\nu+n)}_{m-n-1}(x)  \\
- \psi_{m,m}\, x\, C^{(\nu+n)}_{m-n}(x) - \frac{(m+1)}{(m+n+2\nu)} \, \psi_{m+1,m}\, C^{(\nu+n)}_{m-n+1}(x).
\end{multline*}
Now we apply \eqref{eq:Gegenbauer_lambda_p1} in the first term, the three-term recurrence relation \eqref{eq:Gegenbauer_three_term} in the second and 
fifth term and \eqref{eq:Gegenbauer_1mx2} in the third term in order to rewrite this expression in terms of just two 
Gegenbauer polynomials $C^{(\nu)}_{m-n-1}$ and $C^{(\nu)}_{m-n+1}$. 
Using the explicit expressions of $A^{(\nu)}$ and $\Psi^{(\nu)}$ 
it is a straightforward verification that the coefficients of $C^{(\nu)}_{m-n+1}$ and $C^{(\nu)}_{m-n-1}$  are zero. 
This completes the proof of \eqref{eq:firstidentityforWPearson}, and hence of Theorem \ref{thm:Pearson}.

%%%%%%%%%%%%%%%%%%%%%%%%%%%%%%%%%%%%%%%%%%%%%%%%%%%%%%%%%%%%%%%%%%%%%%
%%%%%%%%%%%%%%%%%%%%%%%%%%%%%%%%%%%%%%%%%%%%%%%%%%%%%%%%%%%%%%%%%%%%%%
%%%%%%%%%%%%%%%%%%%%%%%%%%%%%%%%%%%%%%%%%%%%%%%%%%%%%%%%%%%%%%%%%%%%%%

\end{document}